\documentclass[a4paper, 11pt]{article}
\usepackage{amssymb}

\usepackage{amsmath,amssymb,amsthm}
\usepackage[latin1]{inputenc}
\usepackage{version,tabularx,multicol}
\usepackage{graphicx,float,psfrag}
\usepackage{stmaryrd}
 \usepackage{color}
\usepackage[pdftex]{hyperref}
\usepackage{amsfonts}
\usepackage{mathrsfs}
\usepackage{geometry}
\usepackage{cite}
\usepackage[numbers,sort&compress]{natbib}

 \setlength{\topmargin}{-1.5cm}
 \setlength{\oddsidemargin}{0pt}
 \setlength{\evensidemargin}{0pt}
 \setlength{\textwidth}{16.4cm}
 \setlength{\textheight}{24cm}
 \setlength{\parindent}{12pt}
 \setlength{\parskip}{4pt}




\allowdisplaybreaks[4]

\theoremstyle{plain}
\newtheorem{theorem}{Theorem}[section]

 \newtheorem{proposition}[theorem]{Proposition}

 \newtheorem{lemma}[theorem]{Lemma}

\newtheorem{remark}{Remark}[section]

\newcommand{\ind}{{\bf 1}}

  \makeatletter
  \@addtoreset{equation}{section}
  \makeatother

 \def\beqlb{\begin{eqnarray}}\def\eeqlb{\end{eqnarray}}
 \def\beqnn{\begin{eqnarray*}}\def\eeqnn{\end{eqnarray*}}

\newcommand{\bcen}{\begin{center}}
\newcommand{\ecen}{\end{center}}
\newcommand{\bgeqn}{\begin{equation}}
\newcommand{\edeqn}{\end{equation}}

\pagestyle{myheadings}
\begin{document}
\title{On the empty balls of a critical or subcritical branching random walk}
\author{Jie Xiong, Shuxiong Zhang}

\maketitle

\noindent\textit{Abstract:} Let $\{Z_n\}_{n\geq 0 }$ be a critical or subcritical $d$-dimensional branching random walk started from a Poisson random measure whose intensity  measure is the Lebesugue measure on $\mathbb{R}^d$. Denote by $R_n:=\sup\{u>0:Z_n(\{x\in\mathbb{R}^d:|x|<u\})=0\}$ the radius of the largest empty ball centered at the origin of $Z_n$. In this work, we prove that after suitable renormalization, $R_n$ converges in law to some non-degenerate distribution as $n\to\infty$. Furthermore, our work shows that the renormalization scales depend on the offspring law and the dimension of the branching random walk, which  completes the results of \cite{reves02} for the critical binary branching Wiener process.

\bigskip
\noindent\textit{Key words and phrases}: empty ball; dimension; branching random walk; Super-Brownian motion
\bigskip
\noindent Mathematics Subject Classifications (2020): 60J68; 60F05; 60G57

\section{Introduction and Main results}
\subsection{Introduction}
 In this work, we consider a branching random walk (BRW) model $\{Z_n\}_{n\geq0}$ started from the Poisson random measure with Lebesgue intensity. This model is a measure valued process, which is governed by a probability distribution $\{p_k\}_{k\geq 0}$ on natural numbers (called the offspring distribution) and a $\mathbb{R}^d$-valued random vector $X$ (called the step size or displacement). Let us define it in the following way.
 \par
 At time $0$, there exist infinite many particles distributed according to the Poisson random measure, i.e. for any Borel measurable set $A\subset\mathbb{R}^d$,
$$\mathbb{P}(Z_0(A)=k)=\frac{|A|^k}{k!}e^{-|A|},~k\geq0,$$
 where $|\cdot|$ stands for the Lebesgue measure, and by convention $0=|A|^ke^{-|A|}$ if $|A|=\infty$. Then, these  particles die and produce offsprings independently according to the offspring distribution $\{p_k\}_{k\geq 0}$ in a instant. Afterwards, the offspring particles move independently according to the law of $X$ in unit time. This forms a random measure at time $1$, denoted by $Z_1$. Write $u\in Z_i$ if $u$ is a particle at time $i$ in the branching random walk $\{Z_n\}_{n\geq0}$. Denote by $S_u$ the position of particle $u$. Let $N_v,~v\in Z_n,~n\geq0$ be independent random variables with common distribution $\{p_k\}_{k\geq 0}$ and $X^v_i, 1\leq i\leq N_v,~v\in Z_n,~n\geq0$ be independent copies of $X$. In words, $N_v$ stands for the children number of $v$ and $X^v_i$ stands for the displacement of the $i$-th child of $v$. Hence,
 $$Z_1=\sum_{u\in Z_0}\sum^{N_u}_{i=1}\delta_{S_u+X^u_i}.$$
Generally, the measure valued process $Z_n$, $n\geq 2$ is defined  by the following iteration
$$Z_n:=\sum_{x\in Z_{n-1}}\tilde{Z}_1^{x},$$
where
$$\tilde{Z}_1^{x}:=\sum^{N_x}_{i=1}\delta_{S_x+X^x_i},~x\in Z_{n-1}$$
(conditioned on $Z_{n-1}$) are independent.
\par

Let $m:=\sum_{k\geq0}kp_k$ be the mean of the offspring law. We call $\{Z_n\}_{n\geq0}$
 a supercritical (critical, subcritical) branching random walk if $m>1$ ($= 1$, $< 1$). In the remainder of
this paper, we always consider the critical and subcritical case (i.e. $m\leq1$). In order to avoid trivialities, we always assume that
 $$p_0<1,~p_1<1.$$
 Denote by $|Z_n|:=Z_n(\mathbb{R})$ the total population at generation $n$. Let $\mathbb{P}_{\mu}$ be the probability measure under which a measure valued process (say, branching random walk or super-Brownian motion; see Section \ref{sectionthdim2} below) starts from the measure $\mu$. We write $\mathbb{P}=\mathbb{P}_{\mu}$ if $\mu$ is the Poisson random measure with Lebesgue intensity.
  \par
  Since $|Z_0|=\infty$, $\mathbb{P}$-almost surely, for any integer $k,n\geq 1$,
$$\mathbb{P}(|Z_n|>0)\geq\mathbb{P}_{k\delta_0}(|Z_n|>0)=1-(\mathbb{P}_{\delta_0}(|Z_n|=0))^k,$$
where the right hand side converges to $1$ as $k\to\infty$. Therefore, under $\mathbb{P}$, $\{Z_n\}_{n\geq0}$ survives almost surely. So, the following notation $R_n$ is well-defined. Let $B(u):=\{u:|x|<u\}$ be the $d-$dimensional open ball with radius $u$. Denote by $$R_n:=\sup\{u>0:Z_n(B(u))=0\}$$ the radius of the largest empty ball centered at the origin of $Z_n$, where we take $0=\sup\emptyset$ by convention. Or, in other words, $R_n$ is the shortest distance of the particles at time $n$ from the origin.
\par
In this paper, we aim at finding suitable renormalization scales $\{a_n\}_{n\geq0}$ such that for any $r>0$,
$$
\lim_{n\to\infty}\mathbb{P}\left(\frac{R_n}{a_n}\geq r\right)=e^{-F_d(r)},
$$
where $F_d(r)\in(0,\infty)$.
\par
The research on this question was first conducted by \cite{reves02} for the binary branching Wiener process (i.e. $p_0=p_1=1/2$ and $X$ is a standard normal random vector). He proved that if $d=1$, then $R_n/n$ converges in law as $n\to\infty$. For $d =2$ and $d\geq3$, he gave two
conjectures; see Remark \ref{4reer43} and Remark \ref{3ewe3rre} below. Later, \cite{hu05} partially confirmed R\'ev\'esz's conjecture for $d\geq3$; also see \cite{xz21} for this question about the  critical super-Brownian motion model.
\par
Our method to the proof is different from R\'ev\'esz. We first use the Laplace functional formula of the Poisson random measure to show that
$$\mathbb{P}(R_n\geq a_nr)=\exp\left\{-\int_{\mathbb{R}^d} \mathbb{P}_{\delta_{x}}(Z_n(B(a_nr))>0)dx\right\}.$$
Then, we prove the theorems by analyzing lower bounds and upper bounds of $\int_{\mathbb{R}^d} \mathbb{P}_{\delta_{x}}(Z_n(B(a_nr))>0)dx$. In case $d=1$, for the lower bound, we force every branching random walk started from $B(nr)$ to have one particle locate in $B(nr)$ at time $n$. For the upper bound, the key is to use the deviation probabilities of a random walk to argue that branching random walks emanating out from $B(n(r+\delta))$ can hardly reach $B(nr)$ at time $n$, where $\delta>0$ is a small constant. In case $d=2$, using the fact that the scaling limits of the branching random walk is a super-Brownian motion and results of empty balls for the super-Brownian motion (see \cite{xz21}), we obtain the desired lower bound. The upper bound is obtained in virtue of the extreme value theory for branching random walks (see \cite{lalley2015}) and the Poisson
cluster representation of super-Brownian motions. In case $d\geq3$, we use Paly-Zygmund and Markov inequalities to prove the lower bound and upper bound, respectively. We also consider the subcritical case. Unlike the critical case, $R_n$ increases exponentially in the subcritical case.

\subsection{Main results}
Write $\sigma^2:=\sum^\infty_{k=0}k^2p_k-\left[\sum^\infty_{k=0}kp_k\right]^2$. In the following of this paper, to simplify our statement, we always assume that the step size $X$ is a mean zero random vector and each component of $X$ are identically independent distributed. The first theorem consider the $1$-dimensional critical branching random walk.
\begin{theorem}\label{thdim1}Assume $m=1$, $\sigma^2<\infty$ and $d=1$. Suppose that $\mathbb{E}[|X|^{\alpha}]<\infty$ for some $\alpha>2$. Then for $r>0$,

$$\lim_{n\to\infty}\mathbb{P}\left(\frac{R_n}{n}\geq r\right)=e^{-\frac{4r}{\sigma^2}}.$$

\end{theorem}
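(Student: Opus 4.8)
The plan is to start from the exact identity furnished by the Laplace functional of the Poisson random measure,
\[
\mathbb{P}\left(\frac{R_n}{n}\geq r\right)=\exp\left\{-\int_{\mathbb{R}}\mathbb{P}_{\delta_x}\bigl(Z_n(B(nr))>0\bigr)\,dx\right\},
\]
so that the theorem reduces to proving
\[
\lim_{n\to\infty}\int_{\mathbb{R}}\mathbb{P}_{\delta_x}\bigl(Z_n(B(nr))>0\bigr)\,dx=\frac{4r}{\sigma^2}.
\]
By symmetry and translation it is enough to understand, for each fixed displacement $x$, the probability that a branching random walk started from a single particle at $x$ has at least one descendant in the interval $(-nr,nr)$ at generation $n$; writing $x=ny$ and substituting, the integral becomes $n\int_{\mathbb{R}}\mathbb{P}_{\delta_{ny}}(Z_n(B(nr))>0)\,dy$, and the job is to show this converges to $4r/\sigma^2$, with the dominant contribution coming from $|y|$ close to $r$ (particles starting well inside $B(nr)$ survive with probability $\asymp \sigma^{-2}n^{-1}$ but there are only $O(n)$ of them after the rescaling — wait, more precisely the starting points inside the ball contribute $O(1)$ each to the rescaled integrand over a set of $y$ of length $2r$, while starting points outside contribute a boundary layer).

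The core estimate is a \emph{local} one: for a critical BRW started from $\delta_x$, control $\mathbb{P}_{\delta_x}(Z_n(B(nr))>0)$ as a function of $x$. The key probabilistic input is Kolmogorov's estimate $\mathbb{P}_{\delta_0}(|Z_n|>0)\sim 2/(\sigma^2 n)$ together with Yaglom's theorem (conditioned on survival, $|Z_n|/n$ has an exponential limit law), which handles the case when $x$ is deep inside $B(nr)$: there the event $\{Z_n(B(nr))>0\}$ is essentially $\{|Z_n|>0\}$ up to an error governed by the probability that \emph{all} of the $\Theta(n)$ surviving particles have wandered a distance $\Theta(n)$ to exit the ball — this is where the moment hypothesis $\mathbb{E}|X|^\alpha<\infty$, $\alpha>2$, enters, via a union bound over the (typically $\Theta(n)$, conditionally) surviving particles against the random-walk deviation bound $\mathbb{P}(|S_n|\geq cn)\leq Cn/(cn)^\alpha=o(1/n)$. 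So for $y$ with $|y|<r$ fixed, $n\,\mathbb{P}_{\delta_{ny}}(Z_n(B(nr))>0)\to n\cdot 2/(\sigma^2 n)=2/\sigma^2$; integrating over $y\in(-r,r)$ gives $4r/\sigma^2$, which is exactly the claimed constant, so the remaining work is to show the \emph{boundary region} $|y|\approx r$ and the \emph{exterior} $|y|>r$ contribute nothing in the limit, and to produce a dominating function justifying the interchange of limit and integral.

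For the lower bound I would follow the paper's stated strategy: for $y\in(-r,r)$, force one lineage to stay inside, i.e. bound $\mathbb{P}_{\delta_{ny}}(Z_n(B(nr))>0)$ below by $\mathbb{P}_{\delta_0}(|Z_n|>0)$ times the probability that a single tagged random-walk path started at $ny$ remains in $(-nr,nr)$ for $n$ steps — the latter tends to $1$ for fixed $|y|<r$ by the law of large numbers/Donsker, uniformly on compacts, so Fatou gives $\liminf\geq 4r/\sigma^2$. For the upper bound, fix small $\delta>0$ and split the $x$-integral into $|x|\leq n(r+\delta)$, where I bound $\mathbb{P}_{\delta_x}(Z_n(B(nr))>0)\leq\mathbb{P}_{\delta_x}(|Z_n|>0)\leq 2/(\sigma^2 n)(1+o(1))$ uniformly, contributing at most $(4(r+\delta)/\sigma^2)(1+o(1))$, and $|x|>n(r+\delta)$, where a particle must travel at least $n\delta$ to reach the ball, so $\mathbb{P}_{\delta_x}(Z_n(B(nr))>0)\leq \mathbb{E}[\text{number of descendants reaching }B(nr)]=\mathbb{E}[|Z_n|]\,\mathbb{P}(S_n\in B(nr)-x)\leq \mathbb{P}(|S_n|\geq n\delta+\text{dist})$ (using $\mathbb{E}|Z_n|=1$ in the critical case, i.e.\ the expected population is constant), and summing/integrating this random-walk tail over the exterior converges, with total contribution $o(1)$ provided $\mathbb{E}|X|^\alpha<\infty$ for some $\alpha>2$ (one needs $\alpha>2$ so that $\int_{\delta}^\infty t\cdot t^{-\alpha}\,dt<\infty$ after the change of variables $x=nt$, giving a summable, $n$-independent bound). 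Letting $\delta\downarrow0$ closes the gap.

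The main obstacle I anticipate is the \textbf{interior estimate's uniformity and error control}: proving that $n\,\mathbb{P}_{\delta_{ny}}(Z_n(B(nr))>0)\to 2/\sigma^2$ with enough uniformity in $y$ (and a uniform dominating bound) to integrate. The subtlety is that conditioned on $\{|Z_n|>0\}$ there are order $n$ particles, each of which could independently exit the ball, so the naive union bound costs a factor $n$ against the single-particle exit probability $\mathbb{P}(|S_n|\geq c n)$, and one genuinely needs this to be $o(1/n)$, i.e.\ $\mathbb{E}|X|^\alpha<\infty$ with $\alpha>2$ strictly — at $\alpha=2$ the argument is borderline. Handling the $\Theta(n)$-particle union bound properly probably requires conditioning on the genealogy (or using the second-moment/Kolmogorov–Yaglom machinery to know that with high probability the population is $O(n\log n)$, say) and then a careful large-deviations estimate for the minimum displacement among $\Theta(n)$ correlated walks; this is the technical heart and is precisely where the hypothesis $\alpha>2$ is used.
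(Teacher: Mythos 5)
Your proposal is correct and follows essentially the same route as the paper: the Laplace functional identity, the lower bound on the integrand by forcing a single tagged lineage to stay in $B(nr)$ (law of large numbers plus Kolmogorov's estimate $n\mathbb{P}_{\delta_0}(|Z_n|>0)\to 2/\sigma^2$), and the upper bound by splitting at $|x|\le n(r+\delta)$ with the survival-probability bound inside and the first-moment/Nagaev tail estimate outside, then $\delta\to 0$. The ``main obstacle'' you anticipate is not actually needed: for $|x|\le n(r+\delta)$ one only uses the trivial inclusion $\mathbb{P}_{\delta_x}(Z_n(B(nr))>0)\le\mathbb{P}_{\delta_0}(|Z_n|>0)$, so no uniform interior asymptotic, no union bound over the $\Theta(n)$ conditionally surviving particles, and no Yaglom-type control is required --- the matching constant comes solely from letting $\delta\to0$, and the hypothesis $\alpha>2$ enters only in the exterior estimate, where the Nagaev bound gives a contribution of order $n^{2-\alpha}\to0$.
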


The next theorem considers the case of $d=2$. Denote by $\{X_t\}_{t\geq0}$ a critical $d$-dimensional super-Brownian motion with branching mechanism $\psi(u)=\sigma^2u^2$ (for its serious definition see Section \ref{sectionthdim2} below). Define $B(x,r):=\{y\in\mathbb{R}^d:|y-x|<r\}$.
\begin{theorem}\label{thdim2}Assume $m=1$, $\sigma^2<\infty$ and $d=2$. Suppose $\mathbb{E}[|X|^4]<\infty$ and the correlation coefficient matrix of $X$ is the identity matrix. Then there exists a function $C(\cdot)>0$ such that $\lim_{\varepsilon\to0+}C(\varepsilon)=\infty$, and for any $r,\delta>0$,
\begin{align}
\exp\left\{ -\frac{2\pi(1+\delta)r^2}{\sigma^2}-C(\delta)  \right\}
&\leq \liminf_{n\to\infty}\mathbb{P}\left(\frac{R_n}{\sqrt n}\geq r\right)\cr
&\leq\limsup_{n\to\infty}\mathbb{P}\left(\frac{R_n}{\sqrt n}\geq r\right)\leq \exp\left\{\int_{\mathbb{R}^2}\log\mathbb{P}_{\delta_0}\left(X_1(B(x,r))=0\right)dx\right\}.\nonumber
\end{align}
\end{theorem}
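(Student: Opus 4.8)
The starting point is the Poisson--Laplace identity recalled in the introduction: with $a_n=\sqrt n$ in dimension $d=2$ one has $\mathbb P(R_n/\sqrt n\geq r)=\mathbb P(Z_n(B(\sqrt n\,r))=0)=\exp\{-I_n\}$, where $I_n:=\int_{\mathbb R^2}\mathbb P_{\delta_x}(Z_n(B(\sqrt n\,r))>0)\,dx=n\int_{\mathbb R^2}\mathbb P_{\delta_{\sqrt n\,y}}(Z_n(B(\sqrt n\,r))>0)\,dy$ after the substitution $x=\sqrt n\,y$. Both inequalities thus reduce to two-sided control of $I_n$. The inputs I would use are: (i) the diffusive invariance principle by which the rescaled single-particle cluster $n^{-1}Z_n(\sqrt n\,\cdot)$, conditioned on $\{|Z_n|>0\}$, converges to the time-$1$ marginal of the canonical (excursion) measure $\mathbb N_y$ of the limiting super-Brownian motion $X$ started at $y$; (ii) the Kolmogorov--Yaglom asymptotics $n\,\mathbb P_{\delta_0}(|Z_n|>0)\to 2/\sigma^2$, which matches $\mathbb N_0(X_1\neq 0)=2/\sigma^2$ on the limit side; (iii) the non-degenerate empty-ball estimates for critical super-Brownian motion from \cite{xz21}, notably $\mathbb P_{\delta_0}(X_1(B(x,r))=0)=\exp\{-\mathbb N_x(X_1(B(0,r))>0)\}$ and finiteness of $\int_{\mathbb R^2}\mathbb N_x(X_1(B(0,r))>0)\,dx$; and (iv) the extreme-value bounds for the maximal displacement of a critical branching random walk from \cite{lalley2015}. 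The hypotheses $\sigma^2<\infty$ and $\mathbb E[|X|^4]<\infty$ secure the invariance principle and, crucially, rule out a single anomalously long step dragging a distant particle into $B(\sqrt n\,r)$; the identity-correlation assumption makes the limiting spatial motion isotropic, matching the round balls.

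\textbf{Upper bound on $\mathbb P(R_n/\sqrt n\geq r)$ (i.e. a lower bound on $\liminf_n I_n$).} Fix $y\in\mathbb R^2$ and write $n\,\mathbb P_{\delta_{\sqrt n\,y}}(Z_n(B(\sqrt n\,r))>0)=\bigl(n\,\mathbb P_{\delta_{\sqrt n\,y}}(|Z_n|>0)\bigr)\,\mathbb P_{\delta_{\sqrt n\,y}}(Z_n(B(\sqrt n\,r))>0\mid |Z_n|>0)$. The first factor converges to $2/\sigma^2$, and the conditioned law of $n^{-1}Z_n(\sqrt n\,\cdot)$ converges to that of $X_1$ under $\mathbb N_y(\,\cdot\mid X_1\neq 0)$. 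Since $\{\mu:\mu(B(0,r))>0\}$ is open in the vague topology, the Portmanteau inequality gives $\liminf_n\mathbb P_{\delta_{\sqrt n\,y}}(Z_n(B(\sqrt n\,r))>0\mid |Z_n|>0)\geq\mathbb N_y(X_1(B(0,r))>0\mid X_1\neq 0)$, once one checks $\mathbb N_y(X_1(\partial B(0,r))>0)=0$. Multiplying, $\liminf_n n\,\mathbb P_{\delta_{\sqrt n\,y}}(Z_n(B(\sqrt n\,r))>0)\geq\mathbb N_y(X_1(B(0,r))>0)$ for every $y$, so Fatou's lemma yields $\liminf_n I_n\geq\int_{\mathbb R^2}\mathbb N_y(X_1(B(0,r))>0)\,dy=-\int_{\mathbb R^2}\log\mathbb P_{\delta_0}(X_1(B(y,r))=0)\,dy$, where the last equality uses the excursion-measure identity together with the translation and reflection invariance of the ball. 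Exponentiating gives $\limsup_n\mathbb P(R_n/\sqrt n\geq r)\leq\exp\{\int_{\mathbb R^2}\log\mathbb P_{\delta_0}(X_1(B(x,r))=0)\,dx\}$.

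\textbf{Lower bound on $\mathbb P(R_n/\sqrt n\geq r)$ (i.e. an upper bound on $\limsup_n I_n$).} Split $I_n=n\int_{|y|<r\sqrt{1+\delta}}+n\int_{|y|\geq r\sqrt{1+\delta}}$. On the near region, crudely $\mathbb P_{\delta_{\sqrt n\,y}}(Z_n(B(\sqrt n\,r))>0)\leq\mathbb P_{\delta_0}(|Z_n|>0)=\frac{2}{\sigma^2 n}(1+o(1))$, so this part is at most $\pi r^2(1+\delta)\cdot\frac{2}{\sigma^2}\cdot(1+o(1))\to\frac{2\pi(1+\delta)r^2}{\sigma^2}$, namely the area of $B(r\sqrt{1+\delta})$ times the Yaglom constant. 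On the far region, a descendant landing in $B(\sqrt n\,r)$ of a particle started at $\sqrt n\,y$ must be displaced by more than $\sqrt n(|y|-r)$, whence $\mathbb P_{\delta_{\sqrt n\,y}}(Z_n(B(\sqrt n\,r))>0)\leq\mathbb P_{\delta_0}(M_n>\sqrt n(|y|-r))$ with $M_n:=\sup_{u\in Z_n}|S_u|$; the extreme-value bounds of \cite{lalley2015} for the normalized maximal displacement, reinforced by the fourth-moment control that suppresses long single steps, give $n\,\mathbb P_{\delta_0}(M_n>\sqrt n\,s)\leq\Phi(s)$ for a rapidly decaying $\Phi$, so the far part is at most $2\pi\int_{r(\sqrt{1+\delta}-1)}^{\infty}\Phi(s)(r+s)\,ds+o(1)=:C(\delta)+o(1)$. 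The comparison with $\{M_n>\sqrt n(|y|-r)\}$ is lossless only when $|y|$ stays bounded away from $r$, which is exactly why $C(\delta)$ deteriorates as $\delta\downarrow 0$. Adding the two parts, $\limsup_n I_n\leq\frac{2\pi(1+\delta)r^2}{\sigma^2}+C(\delta)$, which is the asserted lower bound.

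\textbf{Main obstacle.} The crux is the passage from the branching random walk to super-Brownian motion at the level of the event ``some particle of $Z_n$ lies in $B(\sqrt n\,r)$''. In the limit a single particle carries vanishing mass, so a small pre-limit cluster hovering near the ball can trigger this event even when the macroscopic limit misses the ball entirely; this asymmetry is precisely why the soft Portmanteau argument delivers only the one-sided estimate of the upper bound, and why the lower bound must go through the explicit near/far decomposition with the moment hypothesis doing the work of excluding long jumps. I expect the most technical steps to be: establishing the convergence of the conditioned rescaled critical cluster to $\mathbb N_y$ with the correct normalization $2/\sigma^2$ and checking that $\{\mu(B(0,r))>0\}$ is a continuity set for $X_1$ under $\mathbb N_y$; and, on the far region, extracting from \cite{lalley2015} a displacement tail uniform enough in the starting point $\sqrt n\,y$ to be integrated against $dy$ over all of $\{|y|\geq r\sqrt{1+\delta}\}$, which is genuinely delicate in $d=2$ because the cluster spreads on the same scale $\sqrt n$ as the ball itself.
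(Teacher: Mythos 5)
Your proposal is correct in its overall architecture and reaches both bounds by the same global strategy as the paper: the Poisson--Laplace identity (\ref{jitbndf12dim2}), a lower bound on the integral via the super-Brownian scaling limit for the probability upper bound, and a near/far decomposition (Yaglom constant on $\{|x|\le(1+\delta)r\}$, maximal-displacement tails outside) for the probability lower bound. The two places where you genuinely diverge are worth comparing. For the upper bound, you pass through the conditioned single-cluster limit and the canonical (excursion) measure $\mathbb N_y$, using Portmanteau for the open set $\{\mu:\mu(B(0,r))>0\}$ and the infinite-divisibility identity $\mathbb P_{\delta_0}(X_1(B(x,r))=0)=\exp\{-\mathbb N_x(X_1(B(0,r))>0)\}$; the paper instead aggregates $n$ initial particles, writes $\mathbb P_{n\delta_0}(\frac1nZ_n(\sqrt n B(x,r))=0)=[1-\mathbb P_{\delta_0}(\cdot)]^n$, and uses the elementary inequality $1-e^x\ge -x(1+x/2)$ together with monotone limits of Laplace functionals under the $\mathbb P_{n\delta_0}$ invariance principle of \cite{lalley2015}, which avoids invoking the conditional (cluster) limit theorem and any continuity-set discussion; both routes are sound, yours needing the slightly stronger conditioned convergence as input. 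For the far region of the lower bound, your plan --- integrate a uniform-in-$n$ tail $n\,\mathbb P_{\delta_0}(M_n>\sqrt n\,s)\le\Phi(s)$ directly over $\{|y|\ge r\sqrt{1+\delta}\}$ --- is actually simpler than the paper's, which combines Fatou with the conditional limit of the maximum (\cite{lalley2015}), the Poisson cluster representation, and a dedicated lemma (Lemma \ref{gjbgk45}) giving an exponential tail for the support of $X_1$ via the historical superprocess modulus of continuity from \cite{perkins}. Two caveats on your version: the needed uniform bound is not an ``extreme-value'' consequence of \cite{lalley2015} (whose results are distributional limits only) but is exactly Kesten's estimate $\mathbb P_{\delta_0}(M_n''\ge t\sqrt n\,\big|\,|Z_n|>0)\le C(1+(\log t)^2)/t^4$ from \cite{Kesten1995}, valid under the fourth-moment hypothesis and used by the paper itself as the dominating function --- with $\Phi(s)\asymp s^{-4}\log^2 s$ the far integral converges in $d=2$, so your argument closes; and your $C(\delta)$ as written depends on $r$ through the integration limit, so to match the statement you should replace it by its supremum over $r>0$ (finite, depending only on $\delta$, and blowing up as $\delta\downarrow0$, as required). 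Also, your translation to $\mathbb P_{\delta_0}(M_n>\sqrt n(|y|-r))$ already removes any dependence on the starting point, so the uniformity issue you flag as delicate is not actually present. The price of your simpler far-region bound is only a polynomially (rather than exponentially) decaying correction in $r$, which is irrelevant for the theorem as stated and for the sharpness discussion in the paper's remarks.
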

\begin{remark}\label{4reer43}
From \cite[Theorem 1.2]{xz21}, we have
$$\lim_{r\to\infty}\frac{\int_{\mathbb{R}^2}-\log\mathbb{P}_{\delta_0}\left(X_1(B(x,r))=0\right)dx}{2\pi r^2/\sigma^2}=1.$$
In \cite[Conjecture 1]{reves02}, R\'ev\'esz conjectured that for a two-dimensional critical binary branching Wiener process (thus $\sigma^2=1$),
$$
\lim_{n\to\infty}\mathbb{P}\left(\frac{R_n}{\sqrt n}\geq r\right)=e^{-F_2(r)},
$$
where $F_2(r)\in(0,\infty)$ satisfying
\begin{align}\label{6ytgwevde}
\lim_{r\to\infty}\frac{F_2(r)}{2\pi r^2}=1.
\end{align}
Unfortunately, we can not prove $\lim_{n\to\infty}\mathbb{P}\left(\frac{R_n}{\sqrt n}\geq r\right)$ exists. But if proved, (\ref{6ytgwevde}) is a direct consequence of Theorem \ref{thdim2}. Thus, in a sense, our lower bound and upper bound are relatively sharp.
\end{remark}
\begin{remark}
Observe that under $\mathbb{P}_{\delta_0}\big( \cdot \big||Z_n|>0\big)$,
 $\left\{\frac{1}{n}Z_{\lfloor nt\rfloor}(\sqrt n\cdot)\right\}_{t\geq0}$ converges weakly to some measure-valued process $\{Y_t\}_{t\geq 0}$ as $n\to\infty$, where $\{Y_t\}_{t\geq 0}$ is related to $\{X_t\}_{t\geq 0}$ by the Poisson cluster representation; see (\ref{4f3w2e2323}) below. On the other hand, from (\ref{jitbndf12dim2}) below, we have
$$
\mathbb{P}\left(\frac{R_n}{\sqrt n}\geq r\right)=\exp\left\{-n\mathbb{P}_{\delta_0}(|Z_n|>0)\int_{\mathbb{R}^2}\mathbb{P}_{\delta_0}\left(\frac{1}{n}Z_n(\sqrt nB(x,r))>0\Big||Z_n|>0\right)dx \right\}.
$$
Thus, under the assumption of Theorem \ref{thdim2}, we conjecture that
 $$\lim_{n\to\infty}\mathbb{P}\left(\frac{R_n}{\sqrt n}\geq r\right)=\exp\left\{\int_{\mathbb{R}^2}\log\mathbb{P}_{\delta_0}\left(X_1(B(x,r))=0\right)dx\right\}$$
 (namely, the upper bound is sharp). However, since $\ind_{\{z>0\}}$ is not a continuous function on $[0,\infty)$, the conjecture is not a direct corollary of the weak convergence.
\end{remark}
 Denote by $v_d(r):=\frac{\pi^{d/2}r^d}{\Gamma(d/2+1)}$ (where $\Gamma(\cdot)$ is the so-called Gamma function) the volume of a $d$-dimensional ball with radius $r$. Let $X^{(1)}$ be the first component of vector $X$.
\begin{theorem}\label{thdim3}
Assume $m=1$, $\sigma^2<\infty$ and $d\geq3$. Suppose that $\mathbb{E}[|X|^3]<\infty$. Then there exists a positive function $F_d(r)$, $r>0$ such that

$$\lim_{n\to\infty}\mathbb{P}\left(R_n\geq r\right)=e^{-F_d(r)}.$$
Moreover,
$$\frac{v_d(r)}{1+\sigma^2 C_d(r)r^2}\leq F_d(r)\leq v_d(r),$$
where $C_d(r):=\frac{2\Big[1+6\mathbb{E}[|X^{(1)}|^3]/\left(r\mathbb{E}[|X^{(1)}|^2]^{3/2}\right)\Big]^d}{(d-2)}+1.$
\end{theorem}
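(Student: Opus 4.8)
The plan is to carry out the strategy announced in the introduction. By the exponential formula for the Poisson random measure with Lebesgue intensity, $\mathbb{P}(R_n\geq r)=\mathbb{P}(Z_n(B(r))=0)=\exp(-I_n)$ with $I_n:=\int_{\mathbb{R}^d}\mathbb{P}_{\delta_x}(Z_n(B(r))>0)\,dx$ (finite, by the Markov bound below), and translation invariance of the step law (changing variables $x\mapsto-x$) rewrites this as $I_n=\int_{\mathbb{R}^d}v_n(x)\,dx$, where $v_n(x):=\mathbb{P}_{\delta_0}(Z_n(B(x,r))>0)$. So the theorem amounts to: $(I_n)$ converges to a limit $F_d(r)$, and $v_d(r)/(1+\sigma^2C_d(r)r^2)\leq F_d(r)\leq v_d(r)$. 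Below, $(S_k)_{k\geq0}$ is the random walk $S_k=X_1+\dots+X_k$, $P$ is the averaging operator $Pv(x)=\mathbb{E}[v(x-X)]$ (so $\int_{\mathbb{R}^d}Pv=\int_{\mathbb{R}^d}v$ for nonnegative $v$), and $g_k(y):=\mathbb{P}(S_k\in B(y,r))$, so that $\int_{\mathbb{R}^d}g_k=v_d(r)$ for every $k$.

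\emph{Existence of the limit, and the upper bound.} Conditioning on the first generation gives $1-v_n=f\bigl(1-Pv_{n-1}\bigr)$, i.e. $v_n=g\circ(Pv_{n-1})$ with $g(s):=1-f(1-s)$, where $f$ is the offspring generating function. Since $f$ is convex with $f(1)=1$ and $f'(1)=m=1$, we have $f(1-s)\geq1-s$, hence $0\leq g(s)\leq s$ on $[0,1]$; as $0\leq Pv_{n-1}\leq1$ this yields $v_n\leq Pv_{n-1}$ pointwise, so $I_n\leq I_{n-1}$. Thus $(I_n)_{n\geq0}$ is nonincreasing, so $F_d(r):=\lim_nI_n$ exists and $F_d(r)\leq I_0=|B(r)|=v_d(r)$, which is the upper bound. (Equivalently $\mathbb{P}(R_n\geq r)=e^{-I_n}$ is nondecreasing in $n$; the Markov-inequality route mentioned in the introduction gives the same bound directly via $v_n(x)\leq\mathbb{E}_{\delta_0}[Z_n(B(x,r))]=g_n(x)$ and $\int_{\mathbb{R}^d}g_n=v_d(r)$.)

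\emph{The lower bound.} By the Paley--Zygmund inequality applied to $Z_n(B(x,r))\geq0$ under $\mathbb{P}_{\delta_0}$, and $\mathbb{E}_{\delta_0}[Z_n(B(x,r))]=g_n(x)$ (many-to-one, $m=1$), one has $v_n(x)\geq g_n(x)^2/\mathbb{E}_{\delta_0}[Z_n(B(x,r))^2]$. The second-moment (many-to-two) formula for the critical branching random walk, where $\mathbb{E}[N(N-1)]=\sigma^2$ because $m=1$, gives
$$\mathbb{E}_{\delta_0}[Z_n(B(x,r))^2]=g_n(x)+\sigma^2\sum_{j=0}^{n-1}\mathbb{E}\bigl[g_{n-j}(x-S_j)^2\bigr].$$
Bounding $g_{n-j}(x-S_j)^2\leq\|g_{n-j}\|_\infty\,g_{n-j}(x-S_j)$ and using $\mathbb{E}[g_{n-j}(x-S_j)]=\mathbb{P}(S_n\in B(x,r))=g_n(x)$ (decompose $S_n=S_j+(S_n-S_j)$) gives $\mathbb{E}_{\delta_0}[Z_n(B(x,r))^2]\leq g_n(x)\bigl(1+\sigma^2\sum_{k=1}^n\|g_k\|_\infty\bigr)$, hence after integrating in $x$,
$$I_n\;\geq\;\frac{v_d(r)}{1+\sigma^2\sum_{k=1}^n\|g_k\|_\infty}\;\geq\;\frac{v_d(r)}{1+\sigma^2\sum_{k=1}^\infty\|g_k\|_\infty}.$$
It remains to show $\sum_{k\geq1}\|g_k\|_\infty\leq C_d(r)r^2$. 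Since the coordinates of $X$ are i.i.d. and $\{|S_k-x|<r\}\subseteq\bigcap_{l=1}^d\{|S_k^{(l)}-x^{(l)}|<r\}$, we have $\|g_k\|_\infty\leq\bigl(\sup_t\mathbb{P}(|S_k^{(1)}-t|<r)\bigr)^d$; the Berry--Esseen theorem (here $\mathbb{E}[|X|^3]<\infty$ enters) bounds the one-dimensional concentration function, up to the universal Berry--Esseen constant, by $\min\bigl\{1,\ (r/\sqrt k)\bigl(1+6\mathbb{E}[|X^{(1)}|^3]/(r\,\mathbb{E}[|X^{(1)}|^2]^{3/2})\bigr)\bigr\}$. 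Splitting the series at the scale $k\asymp r^2$ — the trivial bound $\|g_k\|_\infty\leq1$ for the $O(r^2)$ small indices, and $\|g_k\|_\infty\leq\bigl((r/\sqrt k)(1+\cdots)\bigr)^d$ for larger $k$, together with $\sum_{k>r^2}k^{-d/2}\leq\int_{r^2}^\infty s^{-d/2}\,ds=\tfrac{2}{d-2}r^{2-d}$ — yields exactly $\sum_{k\geq1}\|g_k\|_\infty\leq r^2+\tfrac{2}{d-2}\bigl(1+6\mathbb{E}[|X^{(1)}|^3]/(r\,\mathbb{E}[|X^{(1)}|^2]^{3/2})\bigr)^d r^2=C_d(r)r^2$.

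\emph{The main obstacle.} The conceptual steps are short: the Poisson reduction, the subsolution monotonicity $v_n\leq Pv_{n-1}$ giving both existence of the limit and the upper bound, and Paley--Zygmund with the explicit two-point function. The real work is the final estimate — converting the Berry--Esseen/local bound into precisely the stated $C_d(r)$, uniformly in $k$ (in particular for small $k$, where Berry--Esseen is weak, and whether or not $X^{(1)}$ is lattice-valued), with the right split point and the right bookkeeping of the absolute constants so that the tail $\sum_{k\gtrsim r^2}k^{-d/2}$ contributes the factor $\tfrac{2}{d-2}$. It is precisely the finiteness of this tail that forces $d\geq3$, which is why $d=1$ and $d=2$ must be handled by the separate arguments of Theorems~\ref{thdim1} and~\ref{thdim2}.
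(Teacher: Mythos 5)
Your proposal is correct, and for the two quantitative bounds it follows essentially the same route as the paper: the Poisson/Laplace reduction to $I_n=\int_{\mathbb{R}^d}\mathbb{P}_{\delta_x}(Z_n(B(r))>0)\,dx$; the first-moment (Markov) bound for $F_d(r)\leq v_d(r)$; and Paley--Zygmund combined with the most-recent-common-ancestor second-moment identity (your $\sigma^2\sum_{j}\mathbb{E}[g_{n-j}(x-S_j)^2]$ is exactly the paper's computation in (\ref{yten12})), the coordinatewise Berry--Esseen concentration bound $\|g_k\|_\infty\leq\bigl((r+C_4)/\sqrt{k}\bigr)^d$ as in (\ref{revrd34}), and the split of the sum at $k\asymp r^2$ as in (\ref{yh6}), which reproduces exactly the stated $C_d(r)$. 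Where you genuinely differ is the existence of the limit: the paper's Proposition \ref{pro1} works with $u(n,x)=-\log\mathbb{P}_{\delta_x}(Z_n(B(r))=0)$, proves monotonicity of $\int u(n,x)\,dx$ via the recursion for the Laplace exponents, $f(s)\geq s$ and Jensen, and then needs the extra step $c_n=-\log\mathbb{P}_{\delta_0}(|Z_n|=0)\to0$ to transfer the limit from $\int u$ to $I_n=\int(1-e^{-u})$; you instead iterate the hitting probability itself, $1-v_n=f(1-Pv_{n-1})$, and use only $f(1-s)\geq 1-s$ to get $v_n\leq Pv_{n-1}$ pointwise, hence $I_n\leq I_{n-1}$ directly. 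That is a mild but real simplification (no logarithm, no Jensen, no $c_n\to0$ step), it yields $F_d(r)\leq I_0=v_d(r)$ as a byproduct, and it works in the same generality $m\leq1$, $d\geq1$ as Proposition \ref{pro1}. One shared caveat, not a gap relative to the paper: both your concentration estimate and the paper's display following (\ref{revrd34}) tacitly treat each coordinate of $X$ as having unit variance when bounding the Gaussian window term by $r/\sqrt{k}$; otherwise an extra factor $\mathbb{E}[|X^{(1)}|^2]^{-1/2}$ should accompany $r$ in $C_d(r)$.
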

\begin{remark} \label{3ewe3rre}
In Theorem \ref{thdim3}, one can see that there exists a constant $C(d,\sigma)>0$ such that for $r>1$,
$$C(d,\sigma)r^{d-2}\leq F_d(r)\leq v_d(1)r^d.$$
R\'ev\'esz \cite[Conjecture 1]{reves02} conjectured that for a $d$-dimensional ($d\geq3$) critical binary branching Wiener process,
$$\lim_{r\to\infty}\frac{F_d(r)}{C_dr^{d-2}}=1,$$
where $C_d>0$ is a constant satisfying $\lim_{d\to\infty}C_d/[v_d(1)^{(d-2)/d}]=1$. According to our results for the super-Brownian motion \cite[Theorem 3]{xz21}, we think this conjecture is true. Unfortunately, our method to deal with high-dimensional ($d\geq3$) branching random walks is not delicate enough, especially the lower bound of $\mathbb{P}\left(R_n\geq r\right)$.
\end{remark}
\begin{remark} Hu \cite{hu05} studied this question for a $d$-dimensional ($d\geq3$) critical binary branching Wiener process (i.e. $p_0=p_1=\frac{1}{2}$, $X$ is a normal random vector). Our result is similar to those of Hu, but the conditions are much weaker.
\end{remark}
The above three theorems consider the case of $\sigma^2<\infty$. A natural question is to consider the case of $\sigma^2=\infty$. For this purpose, we consider the case that the offspring law is in the domain attraction of an $(1+\beta)$-stable law, where $\beta\in(0,1)$. Unlike the case of $\sigma^2<\infty$, in this case, the renormalization scale depends on the offspring law parameter $\beta$. Denote by $f(s):=\sum^{\infty}_{k=0}p_ks^k,~s\in[0,1]$ the generating function of the offspring law, and let $L(s),s\in[0,1]$ be a slowly varying function as $s\to0+$ (i.e. $\lim_{s\to0+}L(sx)/L(s)=1$ for $x\in(0,1]$).
\begin{theorem}\label{thdimsgimawuqiong}Assume $m=1$ and $f(s)=s+(1-s)^{1+\beta}L(1-s)$ for some $\beta\in\left(0,\frac{1}{d}\right]$. Suppose that $\mathbb{E}[|X|^{\alpha}]<\infty$ for some $\alpha>(\beta+1)d$. Then for $r>0$,
$$
\lim_{n\to\infty}\mathbb{P}\left(\frac{R_n}{b_n}\geq r\right)=\exp\left\{-v_d(r)(\beta^{-1})^{\beta^{-1}}\right\},
$$
where
$$b_n:=\Big[nL\big(\mathbb{P}_{\delta_0}(|Z_n|>0)\big)\Big]^{\frac{1}{\beta d}},~n\geq1.$$
\end{theorem}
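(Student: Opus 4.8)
The plan is to run the same Poisson-functional scheme that underlies the proofs of Theorems \ref{thdim1}--\ref{thdim3}. Applying the Laplace functional of the Poisson random measure with Lebesgue intensity gives
$$\mathbb{P}\big(R_n\ge b_nr\big)=\exp\left\{-\int_{\mathbb{R}^d}\mathbb{P}_{\delta_x}\big(Z_n(B(b_nr))>0\big)\,dx\right\},$$
so the whole theorem reduces to proving
$$\lim_{n\to\infty}\int_{\mathbb{R}^d}\mathbb{P}_{\delta_x}\big(Z_n(B(b_nr))>0\big)\,dx=v_d(r)\,(\beta^{-1})^{\beta^{-1}}.$$
The guiding heuristic is that, since $\beta\le 1/d$ forces $1/(\beta d)\ge 1$, the scale $b_n$ is of at least linear order in $n$, so $b_n/\sqrt{n\log n}\to\infty$; at this scale a branching cloud started at $x$ is, to leading order, just a point mass at $x$ that survives with probability $q_n:=\mathbb{P}_{\delta_0}(|Z_n|>0)$, hence the integrand behaves like $q_n\,\ind_{\{|x|<b_nr\}}$ and the integral like $v_d(b_nr)\,q_n=v_d(r)\,b_n^dq_n$.

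Two classical inputs make this rigorous. First, Slack's theorem for critical Galton--Watson processes in the domain of attraction of an $(1+\beta)$-stable law: from $q_n-q_{n+1}=q_n^{1+\beta}L(q_n)$ one gets $n\,q_n^{\beta}L(q_n)\to\beta^{-1}$, which is exactly what is needed so that $b_n^dq_n=[nL(q_n)]^{1/\beta}q_n\to\beta^{-1/\beta}=(\beta^{-1})^{\beta^{-1}}$. Second, the many-to-one identity (valid because $m=1$): with $S_n$ the sum of $n$ i.i.d.\ copies of $X$, $\mathbb{E}_{\delta_x}\big[\#\{u\in Z_n:S_u\in A\}\big]=\mathbb{P}(x+S_n\in A)$; combined with a tail bound of the form $\mathbb{P}(|S_n|\ge t)\le C\,n\,t^{-\alpha}$ for $t\gg\sqrt{n\log n}$ (obtained from the single-big-jump principle / Fuk--Nagaev when $\alpha>2$, and from the von Bahr--Esseen moment inequality plus Markov when $1<\alpha\le2$), one obtains the crucial smallness
$$b_n^{d}\,\mathbb{P}\big(|S_n|\ge\varepsilon b_n\big)\longrightarrow 0\qquad\text{for every fixed }\varepsilon>0.$$
Indeed the dominant term is $\varepsilon^{-\alpha}\,n\,b_n^{d-\alpha}$, whose exponent of $n$ equals $\big((1+\beta)d-\alpha\big)/(\beta d)<0$ precisely because of the hypothesis $\alpha>(1+\beta)d$, while the Gaussian correction $e^{-c\varepsilon^2 b_n^2/n}$ is negligible since $b_n^2/n\to\infty$ polynomially.

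With these facts the proof is a two-sided squeeze. For the lower bound on the integral, keep only $|x|<b_n(r-\varepsilon)$; there, if $|Z_n|>0$ yet every particle lies outside $B(b_nr)$ then some particle has displaced from $x$ by more than $b_nr-|x|>\varepsilon b_n$, so $\mathbb{P}_{\delta_x}(Z_n(B(b_nr))>0)\ge q_n-\mathbb{P}(|S_n|>\varepsilon b_n)$ by the many-to-one/Markov bound, and integrating over the ball gives $\liminf_n\int\ge v_d(r-\varepsilon)\,\beta^{-1/\beta}$. For the upper bound, split at $|x|=b_n(r+\varepsilon)$: on $\{|x|<b_n(r+\varepsilon)\}$ use $\mathbb{P}_{\delta_x}(Z_n(B(b_nr))>0)\le\mathbb{P}_{\delta_x}(|Z_n|>0)=q_n$, which contributes at most $v_d(r+\varepsilon)\,b_n^dq_n$; on $\{|x|\ge b_n(r+\varepsilon)\}$ use the first-moment bound $\mathbb{P}_{\delta_x}(Z_n(B(b_nr))>0)\le\mathbb{E}_{\delta_x}[Z_n(B(b_nr))]=\mathbb{P}(|x+S_n|<b_nr)$ and Fubini, noting that the ball $\{x:|x+S_n|<b_nr\}$ is disjoint from $\{|x|\ge b_n(r+\varepsilon)\}$ whenever $|S_n|<\varepsilon b_n$, so this piece is at most $v_d(r)\,b_n^d\,\mathbb{P}(|S_n|\ge\varepsilon b_n)\to0$. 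Letting $\varepsilon\downarrow0$ in both inequalities and invoking $b_n^dq_n\to\beta^{-1/\beta}$ yields the limit.

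The main obstacle is the random-walk large-deviation estimate underlying $b_n^d\,\mathbb{P}(|S_n|\ge\varepsilon b_n)\to0$: one must rule out both that branching offspring started at distance $\gtrsim b_nr$ from the origin reach $B(b_nr)$ by time $n$, and that a cloud started well inside $B(b_nr)$ entirely escapes. This is the only place the hypothesis $\alpha>(1+\beta)d$ is used, and it needs genuine care in two situations: the borderline case $\beta d=1$, where $b_n$ is of linear order in $n$ and one faces a true large-deviation event rather than a moderate-deviation one; and the case $\alpha>2$, where the naive Rosenthal/Markov bound $\mathbb{P}(|S_n|\ge t)\lesssim n^{\alpha/2}t^{-\alpha}$ is, as a quick exponent count shows, too weak (for $d\ge2$) and must be replaced by the single-big-jump bound $\mathbb{P}(|S_n|\ge t)\lesssim n t^{-\alpha}$ valid for $t\gg\sqrt{n\log n}$ (for $1<\alpha\le2$ the von Bahr--Esseen inequality gives this directly). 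Everything else — the Poisson identity, the many-to-one identity, and Slack's asymptotics for $q_n$ — enters as a black box.
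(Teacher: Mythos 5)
Your proposal is correct and follows essentially the same route as the paper's proof: the Poisson--Laplace identity reducing everything to $\int_{\mathbb{R}^d}\mathbb{P}_{\delta_x}(Z_n(B(b_nr))>0)\,dx$, Slack's asymptotics giving $b_n^d\,\mathbb{P}_{\delta_0}(|Z_n|>0)\to(\beta^{-1})^{\beta^{-1}}$, a split of the integral at $|x|=b_n(r\pm\varepsilon)$ with the first-moment (many-to-one) bound on the outer part, and a Nagaev-type tail estimate $\mathbb{P}(|W_n|\ge t)\le Cnt^{-\alpha}$ together with $\alpha>(\beta+1)d$ to make the error terms vanish. The only differences are in execution and are harmless: for the inner region the paper tracks one surviving particle and uses a law-of-large-numbers statement $\mathbb{P}(|W_n|<\delta b_n)\to1$, whereas you subtract the first-moment escape probability (requiring $b_n^d\,\mathbb{P}(|S_n|\ge\varepsilon b_n)\to0$, which you establish anyway), and for the outer region you replace the paper's direct integration of the tail bound in $y$ by a Fubini/disjointness argument; both variants work, and your explicit handling of the validity range of the tail bound (Gaussian correction, borderline case $\beta d=1$) is, if anything, more careful than the paper's appeal to the slowly-varying estimates.
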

The above four theorems consider the critical case $m=1$. It turns out that $R_n$ basically grows like a power function. However, the next theorem tells us that, in the subcritical case (i.e. $m<1$), $R_n$ increases exponentially. The result holds in a very general setting.
\begin{theorem}\label{thdim-subcrit}Assume $m<1$, $\sum_{k\geq0}p_kk\log k <\infty$ and $d\geq1$. Suppose that $\mathbb{E}\left[|X|^{\alpha}\right]<\infty$ for some $\alpha>1$. Then for $r>0$,
$$\lim_{n\to\infty}\mathbb{P}\left(\frac{R_n}{(1/m)^\frac{n}{d}}\geq r\right)=e^{-Q(0)v_d(r)},$$
where $Q(s),~s\in[0,1]$ is the unique solution of the functional equation (see \cite[p40]{r1}):
\begin{align}
\begin{cases}
&Q(f(s))=mf(s);\cr
&Q(1)=0,\lim\limits_{s\to1}Q'(s)=1.\nonumber
\end{cases}
\end{align}
Moreover $Q(s)>0$ for $s\in[0,1)$.
\end{theorem}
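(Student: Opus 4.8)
The plan is to start from the identity recorded in the introduction: by the Laplace functional of the initial Poisson random measure together with translation invariance of the branching random walk, writing $a_n:=(1/m)^{n/d}$,
\[
\mathbb{P}\!\left(\frac{R_n}{a_n}\ge r\right)=\exp\{-I_n\},\qquad I_n:=\int_{\mathbb{R}^d}\mathbb{P}_{\delta_0}\big(Z_n(B(x,a_nr))>0\big)\,dx ,
\]
so it suffices to show $I_n\to Q(0)v_d(r)$. Two classical facts will be used as black boxes. First, the many-to-one formula $\mathbb{E}_{\delta_0}[Z_n(A)]=m^n\,\mathbb{P}(X_1+\cdots+X_n\in A)$ for Borel $A$, where $X_1,X_2,\dots$ are i.i.d.\ copies of $X$; in particular $\mathbb{E}_{\delta_0}[|Z_n|]=m^n$. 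Second, the subcritical Yaglom--Kolmogorov asymptotics, valid precisely because $\sum_k p_kk\log k<\infty$ (see \cite{r1}): $m^{-n}\,\mathbb{P}_{\delta_0}(|Z_n|>0)\to Q(0)\in(0,\infty)$, with $Q$ as in the statement. Finally note the bookkeeping identity $a_n^dm^n=1$, so that $v_d(r)a_n^d\,\mathbb{P}_{\delta_0}(|Z_n|>0)=v_d(r)\,m^{-n}\mathbb{P}_{\delta_0}(|Z_n|>0)\to v_d(r)Q(0)$.

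For the upper bound, fix $\eta>0$ and split $I_n$ according to whether $|x|\le(1+\eta)a_nr$ or not. On the first region use the trivial inequality $\mathbb{P}_{\delta_0}(Z_n(B(x,a_nr))>0)\le\mathbb{P}_{\delta_0}(|Z_n|>0)$, which contributes at most $v_d\big((1+\eta)r\big)a_n^d\,\mathbb{P}_{\delta_0}(|Z_n|>0)\to(1+\eta)^dv_d(r)Q(0)$. On the tail region $\{|x|>(1+\eta)a_nr\}$ bound $\mathbb{P}_{\delta_0}(Z_n(B(x,a_nr))>0)\le\mathbb{E}_{\delta_0}[Z_n(B(x,a_nr))]=m^n\,\mathbb{P}(X_1+\cdots+X_n\in B(x,a_nr))$, integrate in $x$ and apply Tonelli; since each ball $B(y,a_nr)$ has Lebesgue measure $v_d(r)a_n^d$ and is disjoint from $\{|x|>(1+\eta)a_nr\}$ whenever $|y|\le\eta a_nr$, the tail contribution is at most
\[
m^nv_d(r)a_n^d\,\mathbb{P}\big(|X_1+\cdots+X_n|>\eta a_nr\big)=v_d(r)\,\mathbb{P}\big(|X_1+\cdots+X_n|>\eta a_nr\big),
\]
which tends to $0$ by Markov's inequality, since $\mathbb{E}|X|<\infty$ forces $\mathbb{E}|X_1+\cdots+X_n|\le n\,\mathbb{E}|X|$ to be negligible against the geometrically growing $a_n$. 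Hence $\limsup_nI_n\le(1+\eta)^dv_d(r)Q(0)$, and letting $\eta\downarrow0$ gives $\limsup_nI_n\le v_d(r)Q(0)$.

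For the lower bound, set $D_n:=\max_{u\in Z_n}|S_u|$ (with $D_n=0$ when $|Z_n|=0$). If $|x|\le(1-\eta)a_nr$ and we are on the event $\{|Z_n|>0,\ D_n<\eta a_nr\}$, then every $u\in Z_n$ satisfies $|S_u-x|\le|S_u|+|x|<a_nr$, so $Z_n(B(x,a_nr))>0$; hence $\mathbb{P}_{\delta_0}(Z_n(B(x,a_nr))>0)\ge\mathbb{P}_{\delta_0}(|Z_n|>0,\ D_n<\eta a_nr)$ for all such $x$, and integrating over $\{|x|\le(1-\eta)a_nr\}$,
\[
I_n\ge(1-\eta)^dv_d(r)a_n^d\,\mathbb{P}_{\delta_0}\big(|Z_n|>0,\ D_n<\eta a_nr\big).
\]
Now $\mathbb{P}_{\delta_0}(D_n\ge\eta a_nr)\le\mathbb{E}_{\delta_0}\big[Z_n(\{|y|\ge\eta a_nr\})\big]=m^n\,\mathbb{P}(|X_1+\cdots+X_n|\ge\eta a_nr)=o(m^n)$ by the same estimate as above, while $\mathbb{P}_{\delta_0}(|Z_n|>0)\sim Q(0)m^n$; subtracting, $\mathbb{P}_{\delta_0}(|Z_n|>0,\ D_n<\eta a_nr)\sim Q(0)m^n$, so $\liminf_nI_n\ge(1-\eta)^dv_d(r)Q(0)$, and letting $\eta\downarrow0$ yields $\liminf_nI_n\ge v_d(r)Q(0)$. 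Combining the two bounds, $I_n\to Q(0)v_d(r)$, and therefore $\mathbb{P}(R_n/a_n\ge r)=e^{-I_n}\to e^{-Q(0)v_d(r)}$.

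The only delicate point is the tail region in the upper bound. The naive estimate — that a particle must travel into the specific far-away ball $B(x,a_nr)$, controlled through moments of $|X_1+\cdots+X_n|$ — is wasteful and in fact is not integrable in $x$ once $\alpha\le d$; the correct observation, used above, is that every particle contributes a ball of a single fixed volume $v_d(r)a_n^d$, so the tail is governed merely by the expected number of particles that have drifted a macroscopic distance $\eta a_nr$ from the origin, and this expectation $m^n\mathbb{P}(|X_1+\cdots+X_n|>\eta a_nr)$ is exponentially small. The remaining care needed is to invoke the subcritical limit in the correctly normalized form $Q(0)=\lim_n m^{-n}\mathbb{P}_{\delta_0}(|Z_n|>0)$, whose positivity is exactly the content of the hypothesis $\sum_k p_kk\log k<\infty$.
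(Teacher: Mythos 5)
Your proposal is correct, and its skeleton matches the paper's: the Poisson--Laplace identity reduces everything to the integral $I_n$, the bulk region of size $a_n r$ is handled by the subcritical survival asymptotics $m^{-n}\mathbb{P}_{\delta_0}(|Z_n|>0)\to Q(0)$ (the paper's Lemma \ref{extinpro1}(iii)), and the error terms are controlled by moment bounds on the walk. Where you genuinely diverge is in the two technical estimates. For the lower bound on $I_n$ the paper forces a single tagged particle to lie in the ball, via $\mathbb{P}_{\delta_x}(Z_n(B(m^{-n/d}r))>0)\geq\mathbb{P}_{\delta_x}(|Z_n|>0)\,\mathbb{P}(|W_n+x|\leq m^{-n/d}r)$ and then shows $\mathbb{P}(|W_n|\leq \delta m^{-n/d})\to1$; you instead intersect survival with the event that the maximal displacement $D_n$ is below $\eta a_n r$, estimating $\mathbb{P}_{\delta_0}(D_n\geq\eta a_nr)\leq m^n\mathbb{P}(|W_n|\geq\eta a_nr)=o(m^n)$ -- both work and cost the same. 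The more substantive difference is the tail of the upper bound on $I_n$: the paper bounds the integrand pointwise by $n\mathbb{P}(|X|\geq m^{-n/d}(|y|-r)/n)\leq n^{1+\alpha}m^{n\alpha/d}\mathbb{E}[|X|^{\alpha}](|y|-r)^{-\alpha}$ and integrates over $\{|y|>r+\delta\}\subset\mathbb{R}^d$, a bound that is integrable only when $\alpha>d$, whereas the theorem assumes only $\alpha>1$; your Tonelli/volume argument, which reduces the whole tail to $v_d(r)\,\mathbb{P}(|W_n|>\eta a_nr)$ and needs only a first moment, sidesteps this integrability issue entirely and is therefore the more robust route for $d\geq 2$ under the stated hypothesis. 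In short, your proof is valid, follows the same overall strategy, but replaces the paper's pointwise tail estimate with a cleaner expected-occupation argument that actually matches the stated moment condition in every dimension.
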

The rest of this paper is organized as follows. Sections \ref{secthdim1}-\ref{secthdimddayu3} are devoted to study the case $m=1$ and $\sigma^2<\infty$, where Theorems \ref{thdim1}-\ref{thdim3} are proved, respectively. We shall consider the case $m=1$ and $\sigma^2=\infty$ in Section \ref{secsigmawuqiong}, and Theorem \ref{thdimsgimawuqiong} is proved in this section. We study the case $m<1$ in Section \ref{secsubcrtical}, where Theorem \ref{thdim-subcrit} is proved.
\section{Proof of Theorem \ref{thdim1}: $m=1$, $d=1$}\label{secthdim1}
Let $\{W_n\}_{n\geq0}$ be a random walk with increment $X$. For the remainder of the paper, we use $\mathbb{P}_x$ (and $\mathbb{E}_x$) to mean that the random walk $\{W_n\}_{n\geq0}$ (or Brownian motion) starts from $x$, and write $\mathbb{P}=\mathbb{P}_0$ for short. We first give a lemma which concerns the deviation probabilities of a random walk.
\begin{lemma}\label{Nagaevlem} Assume $d=1$. If $\mathbb{E}[|X|^{\alpha}]<\infty$ for some $\alpha\geq 1$, then there exists a constant $C_1>0$ such that for any $x>0$ and $n\geq1$,
$$\mathbb{P}(|W_n|\geq xn)\leq \frac{C_1}{n^{\alpha-1}x^\alpha}.$$
\end{lemma}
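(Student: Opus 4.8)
The statement is a classical moment-type (Nagaev / Fuk–Nagaev) bound for the deviation of a mean-zero random walk, so the plan is to reduce it to a truncation argument plus an elementary union bound. First I would fix $x>0$ and $n\geq1$ and split the event $\{|W_n|\geq xn\}$ according to whether some individual increment is large. Precisely, let $M:=\max_{1\leq k\leq n}|X_k|$ where $X_1,\dots,X_n$ are the i.i.d.\ increments, and write
\[
\mathbb{P}(|W_n|\geq xn)\leq \mathbb{P}(M\geq xn/2)+\mathbb{P}\bigl(|W_n|\geq xn,\ M< xn/2\bigr).
\]
The first term is handled directly by a union bound and Markov's inequality at order $\alpha$: $\mathbb{P}(M\geq xn/2)\leq n\,\mathbb{P}(|X|\geq xn/2)\leq n\cdot (2/(xn))^{\alpha}\mathbb{E}[|X|^{\alpha}] = 2^{\alpha}\mathbb{E}[|X|^{\alpha}]/(n^{\alpha-1}x^{\alpha})$, which already has the desired form.

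For the second (truncated) term I would introduce the truncated increments $\widetilde X_k:=X_k\ind_{\{|X_k|< xn/2\}}$ and $\widetilde W_n:=\sum_{k=1}^n\widetilde X_k$; on the event $\{M<xn/2\}$ we have $W_n=\widetilde W_n$. Since $X$ has mean zero, $|\mathbb{E}\widetilde X_k|=|\mathbb{E}[X\ind_{\{|X|\geq xn/2\}}]|\leq (2/(xn))^{\alpha-1}\mathbb{E}[|X|^{\alpha}]$ when $\alpha\geq1$, so $|\mathbb{E}\widetilde W_n|\leq 2^{\alpha-1}\mathbb{E}[|X|^{\alpha}]/(n^{\alpha-2}x^{\alpha-1})$. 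If this centering term is at most $xn/2$ (which one can arrange, treating the finitely many small-$n$ or small-$x$ exceptions by absorbing them into the constant, since the claimed bound is vacuous when $C_1/(n^{\alpha-1}x^{\alpha})\geq1$) then $\{|\widetilde W_n|\geq xn\}\subseteq\{|\widetilde W_n-\mathbb{E}\widetilde W_n|\geq xn/2\}$, and I would apply a $p$-th moment (Markov) inequality to the centered sum $\widetilde W_n-\mathbb{E}\widetilde W_n$ with a suitable $p$. Using Rosenthal's inequality (or, if $\alpha<2$, simply $\mathbb{E}|\widetilde W_n-\mathbb{E}\widetilde W_n|^{\alpha}\leq C n\,\mathbb{E}|X|^{\alpha}$ via the von Bahr–Esseen inequality for $1\leq\alpha\leq2$) one gets $\mathbb{P}(|\widetilde W_n-\mathbb{E}\widetilde W_n|\geq xn/2)\leq C n\,\mathbb{E}[|X|^{\alpha}]/(xn/2)^{\alpha}=C'\mathbb{E}[|X|^{\alpha}]/(n^{\alpha-1}x^{\alpha})$, matching the target. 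Collecting the two estimates and taking $C_1$ to absorb all constants (and the trivial range where the bound exceeds $1$) finishes the proof.

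The main obstacle is organizational rather than deep: one must handle separately the regimes $1\leq\alpha\leq2$ and $\alpha>2$ when bounding the moment of the centered truncated sum — for $\alpha\leq2$ the von Bahr–Esseen inequality gives exactly the linear-in-$n$ growth needed, while for $\alpha>2$ one needs Rosenthal's inequality and must check that the leading contribution on the relevant range of $x$ is the sum-of-$\alpha$-moments term $n\,\mathbb{E}|X|^{\alpha}$ rather than the variance (Gaussian) term $(n\sigma^2)^{\alpha/2}$; since for $\alpha>2$ and $x$ not too small the latter is the smaller of the two after dividing by $(xn)^{\alpha}$, this is not serious, but it is the point that requires care. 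I would also note that only $d=1$ and $\mathbb{E}[|X|^{\alpha}]<\infty$ are used, exactly as in the hypothesis, and that the constant $C_1$ depends only on $\alpha$ and $\mathbb{E}[|X|^{\alpha}]$.
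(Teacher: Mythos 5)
The part of your argument covering $1\leq\alpha\leq 2$ is correct, and it is a genuinely different and more self-contained route than the paper's: there the authors quote Nagaev's Corollary 1.6 with the specific choice $z=nx$, $y=z/2$ and absorb the excluded range into the constant, whereas your truncation plus von Bahr--Esseen argument (in fact, for $\alpha\in[1,2]$ you do not even need the truncation: $\mathbb{E}|W_n|^{\alpha}\leq 2n\mathbb{E}[|X|^{\alpha}]$ and Markov already give the claim) avoids citing the Fuk--Nagaev machinery altogether.

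The genuine gap is the case $\alpha>2$. After dividing by $(xn/2)^{\alpha}$, the two Rosenthal terms are of order $n\mathbb{E}[|X|^{\alpha}]\,n^{-\alpha}x^{-\alpha}=\mathbb{E}[|X|^{\alpha}]\,n^{1-\alpha}x^{-\alpha}$ and $(n\sigma^2)^{\alpha/2}(xn)^{-\alpha}=\sigma^{\alpha}n^{-\alpha/2}x^{-\alpha}$; both carry the same power of $x$, and the Gaussian term exceeds the target $n^{1-\alpha}x^{-\alpha}$ by the factor $n^{\alpha/2-1}\to\infty$ \emph{for every} $x$, so ``$x$ not too small'' cannot rescue the comparison, and the trivial range $n^{\alpha-1}x^{\alpha}\leq C_1$ only covers $x\lesssim n^{-(\alpha-1)/\alpha}$, which stops short of the CLT scale $x\asymp n^{-1/2}$ where the problem sits. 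Indeed no argument can close this gap, because the inequality as stated (for all $x>0$) is false when $\alpha>2$: take standard normal increments, $\alpha=3$ and $x=n^{-1/2}$; then $\mathbb{P}(|W_n|\geq xn)=\mathbb{P}(|B_1|\geq 1)$ is of order one while the claimed bound is $C_1n^{-1/2}$. The best a truncation argument can give for $\alpha>2$ is the Fuk--Nagaev form with an extra exponential term $\exp(-cx^2n/\sigma^2)$, which is exactly what the paper's cited Nagaev Corollary 1.8 provides; the polynomial bound of the lemma is then only valid (with a constant depending on a lower bound for $x$) when $x$ is bounded away from the CLT scale, which is how the lemma is actually used in (\ref{ikuy23}), where $x=|y|-r\geq\delta$. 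So your proof should either restrict the $\alpha>2$ statement to $x\geq\delta$, or retain the exponential term in the conclusion; as written, the Rosenthal step does not yield the asserted bound.
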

\begin{proof}
 For the case of $\alpha\geq 2$, the lemma is a direct consequence of \cite[Corollary 1.8]{Nagaev}. In the next, we only deal with the case $\alpha\in[1,2]$. Let $X_i,~i\geq 1$ be i.i.d. copies of $X$. From \cite[Corollary 1.6]{Nagaev}: if $A^+_{\alpha}:=\sum^n_{i=1}\mathbb{E}\left[X_i^{\alpha}\ind_{\{X_i\geq0\}}\right]<\infty$, then for any $z,~y>0$ satisfying $y^{\alpha}\geq 4A^{+}_{\alpha}$ and $z>y$,
$$
\mathbb{P}(W_n\geq z)\leq n\mathbb{P}(X>y)+\left(\frac{eA^+_{\alpha}}{zy^{\alpha-1}}\right)^{\frac{z}{2y}}.
$$
Let $z=nx$, $y=z/2$. Hence, if $(nx/2)^{\alpha}\geq 4n\mathbb{E}\left[|X|^{\alpha}\right]$, then
\begin{align}
\mathbb{P}(W_n\geq xn)&\leq n\mathbb{P}(X>nx/2)+\frac{eA^+_{\alpha}}{nx(nx/2)^{\alpha-1}}\cr
&\leq \frac{2^{\alpha}n\mathbb{E}\left[|X|^{\alpha}\right]}{(nx)^{\alpha}}+\frac{2^{\alpha-1}en\mathbb{E}\left[|X|^{\alpha}\right]}{(nx)^{\alpha}}\cr
&\leq \frac{2^{\alpha}\mathbb{E}\left[|X|^{\alpha}\right]+2^{\alpha-1}e\mathbb{E}\left[|X|^{\alpha}\right]}{n^{\alpha-1}x^\alpha}.
\end{align}
Thus, the lemma follows.
\end{proof}
 In literature, we call $\{|Z_n|\}_{n\geq 0}$, under $\mathbb{P}_{\delta_0}$, a branching process or Galton-Watson process; see \cite{r1}. It is well known that a branching process with a critical or subcritical offspring law will become extinct with probability $1$. Thus, the survival probability $\mathbb{P}_{\delta_0}(|Z_n|>0)$ converges to $0$ as $n\to\infty$. The following lemma concerns the decay rate of it. Recall that  $\sigma^2:=\sum^\infty_{k=0}k^2p_k-\left[\sum^\infty_{k=0}kp_k\right]^2$, $f(s)=\sum^\infty_{k=0}p_ks^k$ and $L(s)$ is a slowly varying function at $0$.
\begin{lemma}\label{extinpro1}(survival probabilities) \\
(i) If $m=1$ and $\sigma^2<\infty$, then
$$
\lim_{n\to\infty}n\mathbb{P}_{\delta_0}(|Z_n|>0)=\frac{2}{\sigma^2}.
$$
(ii) If $m=1$ and $f(s)=s+(1-s)^{1+\beta}L(1-s)$ for some $\beta\in(0,1]$, then
$$
\lim_{n\to\infty}n\mathbb{P}_{\delta_0}(|Z_n|>0)^{\beta}L(\mathbb{P}_{\delta_0}(|Z_n|>0))=\frac{1}{\beta}.
$$
(iii) If $m<1$ and $\sum_{k\geq0}k\log kp_k<\infty$, then
$$
\lim_{n\to\infty}\frac{1}{m^n}\mathbb{P}_{\delta_0}(|Z_n|>0)=Q(0)\in(0,\infty),
$$
where $Q(\cdot)$ is given in Theorem \ref{thdim-subcrit}.
\end{lemma}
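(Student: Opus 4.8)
All three parts concern the Galton--Watson process $\{|Z_n|\}_{n\ge 0}$ under $\mathbb{P}_{\delta_0}$, so I would begin from the common reduction. Writing $f_n$ for the $n$-fold composition of the offspring generating function $f$ and $q_n:=\mathbb{P}_{\delta_0}(|Z_n|>0)=1-f_n(0)$, the branching property gives the scalar recursion $q_{n+1}=1-f(1-q_n)$; since $m\le 1$ together with $p_0<1$, $p_1<1$ forces $p_0\in(0,1)$, the process dies out and $q_n\downarrow 0$. Everything then reduces to the asymptotics of this recursion, and all three statements are classical; I would give a short self-contained argument for (i) and quote \cite{r1} for the deeper inputs of (ii) and (iii). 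For (i): since $m=1$ we have $f'(1)=1$, and $\sigma^2<\infty$ gives $f''(1^-)=\sum_k k(k-1)p_k=\sigma^2$ with $f''$ continuous at $1$, so $f(1-q)=1-q+\tfrac{\sigma^2}{2}q^2+o(q^2)$ and hence $q_{n+1}=q_n-\tfrac{\sigma^2}{2}q_n^2(1+o(1))$. Then $q_{n+1}/q_n\to1$, so $1/q_{n+1}-1/q_n=(q_n-q_{n+1})/(q_nq_{n+1})\to\sigma^2/2$, and a Stolz--Ces\`{a}ro averaging of these increments yields $1/(nq_n)\to\sigma^2/2$, i.e. $nq_n\to 2/\sigma^2$ (Kolmogorov's estimate).

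For (ii) the special form of $f$ makes the recursion exact: $q_{n+1}=q_n-q_n^{1+\beta}L(q_n)$. Setting $u_n:=q_n^{-\beta}$ and using that $q_n^{\beta}L(q_n)\to0$ (a function slowly varying at $0$ times a positive power of its argument tends to $0$), a first-order expansion of $(1-x)^{-\beta}$ gives $u_{n+1}=u_n+\beta L(q_n)+o(L(q_n))$, whence, after summing and discarding the errors by a weighted Ces\`{a}ro argument (valid since $u_n\to\infty$), $u_n=\beta\sum_{k<n}L(q_k)(1+o(1))$. The target $nq_n^{\beta}L(q_n)=nL(q_n)/u_n\to1/\beta$ is then equivalent to the Karamata-type asymptotic $\sum_{k<n}L(q_k)\sim nL(q_n)$, which holds once one knows that $n\mapsto q_n$ is regularly varying, so that $k\mapsto L(q_k)$ is slowly varying in $k$ and Karamata's theorem applies. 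Upgrading the recursion to that regular variation, by bootstrapping the relation $u_n\approx\beta n L(q_n)$, is the only genuinely delicate step; it is exactly Slack's theorem on critical Galton--Watson processes whose offspring law lies in the domain of attraction of a stable law, and I would either reproduce that bootstrap or invoke it through \cite{r1}.

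For (iii), convexity of $f$ with $f(1)=1$ and $f'(1)=m$ gives the tangent bound $f(1-u)\ge1-mu$, hence $q_{n+1}\le mq_n$, so $q_n/m^n$ is non-increasing and bounded; more generally $Q_n(s):=(1-f_n(s))/m^n$ decreases on $[0,1)$ to a limit. Passing to the limit in the identity $Q_{n+1}(s)=m^{-1}Q_n(f(s))$ and in the relations $Q_n(1)=0$, $f_n'(1)=m^n$ shows that this limit is the function $Q$ of Theorem \ref{thdim-subcrit}, so that $q_n/m^n\to Q(0)$. The substantive point, that $Q(0)>0$ precisely when $\sum_k p_k k\log k<\infty$, is the classical Heathcote--Seneta non-degeneracy theorem for subcritical branching processes (the subcritical counterpart of Kesten--Stigum), which I would cite from \cite{r1}. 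I expect this positivity statement in (iii), together with the regular-variation bootstrap in (ii), to be the main obstacle; the remaining estimates are routine.
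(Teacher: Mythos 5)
Your proposal is correct, and it is more self-contained than what the paper actually does: the paper disposes of this lemma in one line, citing Athreya--Ney \cite{r1} (p.~19 for (i), p.~40 for (iii)) and Slack \cite{Slack} (Lemma 2) for (ii). Your part (i) is a complete and standard Kolmogorov-type argument (Taylor expansion of $f$ at $1$ using continuity of $f''$ on $[0,1]$, then Stolz--Ces\`aro on $1/q_n$), and your part (iii) correctly obtains monotonicity of $Q_n(s)=(1-f_n(s))/m^n$ from convexity, passes to the limit in $Q_{n+1}(s)=m^{-1}Q_n(f(s))$ to get $Q(f(s))=mQ(s)$ (which is the correct functional equation; the display in Theorem \ref{thdim-subcrit} with $Q(f(s))=mf(s)$ is evidently a typo), while rightly deferring the positivity $Q(0)>0$ under $\sum_k p_k k\log k<\infty$ to the Heathcote--Seneta theorem in \cite{r1}. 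For (ii) your reduction via $u_n=q_n^{-\beta}$ and the identification of the Karamata/regular-variation bootstrap as the only delicate step is accurate, and you are honest that at that point you would quote the literature; the only slip is attribution, since that result is not in \cite{r1} but is precisely \cite[Lemma 2]{Slack}, which is what the paper cites. So the net difference is that you reconstruct the classical arguments where they are short and cite only the two genuinely deep inputs, whereas the paper cites everything; your version buys transparency (and makes visible where the $x\log x$ and stable-domain hypotheses are used) at the cost of a little length, with no mathematical gap beyond the steps you explicitly delegate to the references.
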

\begin{proof}
(i) and (iii) come from \cite[p19]{r1} and \cite[p40]{r1}, respectively. (ii) follows from \cite[Lemma 2]{Slack}.
\end{proof}

Now we are ready to prove Theorem \ref{thdim1}.

\begin{proof}
\textbf{Upper bound.} Recall that for a particle $u$, $S_u$ stands for its position. Denote by $\{Z^{\delta_{x}}_n\}_{n\geq0}$ the branching random walk started from a single particle at position $x$. Recall that we write $u\in Z_0$ if $u$ is a particle at time $0$. Since $Z_0$ is a Poisson random measure, by the branching property,
\begin{align}\label{jitbndf12}
\mathbb{P}(R_n\geq nr)&=\mathbb{P}\left(Z_n(B(nr))=0\right)\cr
&=\mathbb{P}\left(\forall u\in Z_0,Z^{\delta_{S_u}}_n(B(nr))=0\right)\cr
&=\mathbb{E}\left[\Pi_{u\in Z_0}\mathbb{P}_{\delta_{S_u}}(Z_n(B(nr))=0)\right]\cr
&=\mathbb{E}\left[e^{\sum_{u\in Z_0}\log \mathbb{P}_{\delta_{S_u}}(Z_n(B(nr))=0)}\right]\cr
&=\mathbb{E}\left[e^{-\int_{\mathbb{R}}-\log \mathbb{P}_{\delta_{x}}(Z_n(B(nr))=0)Z_0(dx)}\right]\cr
&=e^{-\int_{\mathbb{R}} \mathbb{P}_{\delta_{x}}(Z_n(B(nr))>0)dx},
\end{align}
where the last equality follows from the Laplace functional formula of the Poisson random measure; see \cite[p19]{Bovier}.
 Let $L_n:=\min\{|S_u|:u\in Z_n\}$, where we define $+\infty=\min\emptyset$ by convention. Recall that $\{W_n\}_{n\geq0}$ stands for a random walk with increment $X$, and we use $\mathbb{P}_x$ to mean that $\{W_n\}_{n\geq0}$ starts from $x$ and write $\mathbb{P}=\mathbb{P}_0$ for short. Observe that
\begin{align}
\mathbb{P}_{\delta_x}(Z_n(B(nr))>0)&=\mathbb{P}_{\delta_x}\left(Z_n(B(nr))>0\big||Z_n|>0\right)\mathbb{P}_{\delta_x}(|Z_n|>0)\cr
&=\left[1-\mathbb{P}_{\delta_x}\left(Z_n(B(nr))=0\big||Z_n|>0\right)\right]\mathbb{P}_{\delta_x}(|Z_n|>0)\cr
&=\left[1-\mathbb{P}_{\delta_x}\left(L_n\geq nr\big||Z_n|>0\right)\right]\mathbb{P}_{\delta_0}(|Z_n|>0)\cr
&\geq\left[1-\mathbb{P}_{\delta_x}\left(|W_n|\geq nr\big||Z_n|>0\right)\right]\mathbb{P}_{\delta_0}(|Z_n|>0)\cr
&=\mathbb{P}_0\left(|x+W_n|< nr\right)\mathbb{P}_{\delta_0}(|Z_n|>0),\nonumber
\end{align}
where the inequality follows from the fact that $|W_n|\geq L_n$ and the last equality follows from the independent of the branching and the motion.
\par
 Therefore, for $\delta\in(0,r)$, we have
\begin{align}\label{tg3452}
\int_\mathbb{R}\mathbb{P}_{\delta_x}(Z_n(B(nr))>0)dx
&\geq\int_\mathbb{R}\mathbb{P}_{\delta_0}(|Z_n|>0)\mathbb{P}(|W_n+x|< nr)dx\cr
&\geq \int_{|x|\leq n(r-\delta)}\mathbb{P}_{\delta_0}(|Z_n|>0)\mathbb{P}(|W_n+x|< nr)dx\cr
&= \int_{|y|\leq r-\delta }n\mathbb{P}_{\delta_0}(|Z_n|>0)\mathbb{P}(|W_n+ny|< nr)dy\cr
&\geq \int_{|y|\leq r-\delta}n\mathbb{P}_{\delta_0}(|Z_n|>0)\mathbb{P}\left(\frac{|W_n|}{n}<\delta\right)dy\cr
&=2(r-\delta)n\mathbb{P}_{\delta_0}(|Z_n|>0)\mathbb{P}\left(\frac{|W_n|}{n}<\delta\right).
\end{align}
Hence, by the law of large numbers and Lemma \ref{extinpro1} (i), we have
$$\liminf_{n\to\infty}\int_{\mathbb{R}}\mathbb{P}_{\delta_x}(Z_n(B(nr))>0)dx\geq \frac{4(r-\delta)}{\sigma^2}.$$
\par
Plugging above into (\ref{jitbndf12}) and letting $\delta\to0$ give that
\begin{align}
\limsup_{n\to\infty}\mathbb{P}(R_n\geq nr)\leq e^{-\frac{4r}{\sigma^2}}.\nonumber
\end{align}

\smallskip

\noindent\textbf{Lower bound.} Let $\delta\in(0,\infty)$. Note that
\begin{align}\label{wicm45}
&\int_\mathbb{R}\mathbb{P}_{\delta_x}(Z_n(B(nr))>0)dx\cr
&\leq\int_{|x|\leq n(r+\delta)}\mathbb{P}_{\delta_0}(|Z_n|>0)dx+\int_{|x|>n(r+\delta)}\mathbb{P}_{\delta_x}(Z_n(B(nr))>0)dx\cr
&=2\mathbb{P}_{\delta_0}(|Z_n|>0)n(r+\delta)+\int_{|x|>n(r+\delta)}\mathbb{P}_{\delta_x}(Z_n(B(nr))>0)dx.
\end{align}
By the Markov inequality,
\begin{align}\label{7t6ij6y67}
\mathbb{P}_{\delta_x}(Z_n(B(nr))>0)&\leq\mathbb{E}_{\delta_x}[Z_n(B(nr))]\cr
&= \mathbb{E}_{\delta_x}[|Z_n|]\mathbb{P}_{x}(|W_n|\leq nr)\cr
&=\mathbb{P}(|x+W_n|\leq nr)\cr
&\leq \mathbb{P}(|x|-|W_n|\leq nr)\cr
&= \mathbb{P}(|W_n|\geq |x|-nr),
\end{align}
where the first equality follows from the fact that the branching and motion are independent.
\par
 Since $\mathbb{E}[|X|^{\alpha}]<\infty$ for some $\alpha>2$, by (\ref{7t6ij6y67}) and Lemma \ref{Nagaevlem}, we have
\begin{align}\label{ikuy23}
\int_{|x|>n(r+\delta)}\mathbb{P}_{\delta_x}(Z_n(B(nr))>0)dx&\leq
\int_{|x|>n(r+\delta)}\mathbb{P}(|W_n|\geq |x|-nr)dx\cr
&=n\int_{|y|>r+\delta}\mathbb{P}\left(|W_n|\geq n(|y|-r)\right)dy\cr
&\leq\int_{|y|>r+\delta}\frac{C_1}{n^{\alpha-2}(y-r)^\alpha}dy,
\end{align}
where the right hand side converges to $0$ as $n\to\infty$. Plugging (\ref{ikuy23}) into (\ref{wicm45}), and then using Lemma \ref{extinpro1} (i), to get
$$\limsup_{n\to\infty}\int_\mathbb{R}\mathbb{P}_{\delta_x}(Z_n(B(nr))>0)dx\leq \frac{4(r+\delta)}{\sigma^2}.$$
\par
Plugging above into (\ref{jitbndf12}) and letting $\delta\to0$ give that
\begin{align}
\liminf_{n\to\infty}\mathbb{P}(R_n\geq nr)\geq e^{-\frac{4r}{\sigma^2}}.\nonumber
\end{align}
We have completed the proof of Theorem \ref{thdim1}.
\end{proof}

\section{Proof of Theorem \ref{thdim2}: $m=1$, $d=2$}\label{sectionthdim2}
 The key to prove Theorem \ref{thdim2} is to use the fact that the scaling limits of the branching random walk is a super-Brownian motion. So, we first give a brief introduction to the super-Brownian motion.
\par
Let $M_F(\mathbb{R}^d)$ be the space of finite measures on $\mathbb{R}^d$ equipped with the topology of weak convergence. Let $\psi(u)$ be a function of the form
   $$\psi(u)=a u+b u^2+\int_{(0,\infty)}\left(e^{-ru}-1+ru\right)n(dr),~u\geq0,$$
   where $a\in\mathbb{R}$, $b\geq0$ and $n$ is a $\sigma$-finite measure on $(0,\infty)$ such that $\int_{(0,\infty)}(r\wedge r^2) n(dr)<\infty$. The super-Brownian motion with initial value $\mu\in M_F(\mathbb{R}^d)$ and branching mechanism $\psi$ is an $M_F(\mathbb{R}^d)$-valued process, whose transition probabilities are characterized through their Laplace transforms. For any $\mu\in M_F(\mathbb{R}^d)$ and nonnegative continuous function $\phi(x)$, we have
   \begin{align}\label{ujytas}
  \mathbb{E}_\mu\left[e^{-\int_{\mathbb{R}^d}\phi(x)X_t(dx)}\right]=e^{-\int_{\mathbb{R}^d} u(t,x)\mu(dx)},
   \end{align}
  where $u(t,x)$ is the unique positive solution to the following nonlinear partial differential equation:
  \begin{align}
   \begin{cases}
   \frac{\partial u(t,x)}{\partial t}=\frac{1}{2}\Delta u(t,x)-\psi(u(t,x)),\cr
   u(0,x)=\phi(x).\nonumber
   \end{cases}
   \end{align}
In above, $\Delta u(t,x):=\sum^d_{i=1}\frac{\partial^2 u(t,x)}{\partial x_i^2}$ is the Laplace operator. $\{X_t\}_{t\geq0}$ is called a supercritical (critical, subcritical) super-Brownian motion if $a<0~(=0, >0).$ We refer the reader to \cite{Etheridge}, \cite{perkins}, \cite{LeGall} and \cite{Li} for a more detailed overview to super-Brownian motion.
\par
In this paper, we always consider the critical branching mechanism $\psi(u)=\sigma^2u^2$ (recall that $\sigma^2=\sum^\infty_{k=0}k^2p_k-\left[\sum^\infty_{k=0}kp_k\right]^2$). We first present a lemma concerning the probability of a super-Brownian motion charges the level set $[x,\infty)$. Denote by $\{H_t\}_{t\geq0}$ the historical super Brownian motion; see \cite[p187]{perkins} for a formal definition. Intuitively, $H_t$ keeps track of the histories of all the masses of $X_t$. Let $S(H_t)$ be the closed support of the random measure $H_t$ and $C(\mathbb{R}_+)$ be the space of continuous functions from $\mathbb{R}_+$ to $\mathbb{R}$. From \cite[p195]{perkins}, we have the following:\\
(i) there exist a constant $c>2$ and a random variable $\Delta$ such that $\mathbb{P}_{\delta_0}$ almost surely, for all $t\geq0$,
\begin{align}\label{refevr12sa}
S(H_t)\subset\left\{y(\cdot)\in C(\mathbb{R}_+):|y(r)-y(s)|\leq c|(r-s)\log(r-s)|^{1/2}, \forall r,s>0,|r-s|\leq\Delta\right\};
\end{align}
(ii) there are constants $\rho,~\kappa>0$ depending only on $c$ such that
\begin{align}\label{refevr12saq}
\mathbb{P}_{\delta_0}(\Delta\leq r)\leq \kappa r^{\rho}~\text{for}~r\in[0,1].
\end{align}
\begin{lemma}\label{gjbgk45}Let $\{X_t\}_{t\geq0}$ be an one-dimensional critical super-Brownian motion. Then there exists a constant $C_2>0$ such that for any $x>0$,
$$\mathbb{P}_{\delta_0}(X_1([x,\infty))>0)<C_2e^{-x}.$$
\end{lemma}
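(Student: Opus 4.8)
\noindent\emph{Proof strategy.} The plan is to combine the log-Laplace characterisation \reff{ujytas} of the super-Brownian motion with an explicit, exponentially decaying supersolution of the associated semilinear heat equation.

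For $\lambda>0$ let $u_\lambda(t,y)$ denote the solution of $\partial_t u=\tfrac12\partial_y^2 u-\sigma^2u^2$ with $u(0,\cdot)=\lambda\ind_{[x,\infty)}$ (extending \reff{ujytas} to bounded measurable initial data by monotone approximation), so that $\mathbb{E}_{\delta_y}\big[e^{-\lambda X_1([x,\infty))}\big]=e^{-u_\lambda(1,y)}$. Since $e^{-\lambda X_1([x,\infty))}\downarrow\ind_{\{X_1([x,\infty))=0\}}$, letting $\lambda\uparrow\infty$ gives $u_\infty(1,y):=\lim_\lambda u_\lambda(1,y)=-\log\mathbb{P}_{\delta_y}(X_1([x,\infty))=0)$, and because $1-e^{-a}\le a$ it is enough to prove $u_\infty(1,0)\le C_2e^{-x}$. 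I would first record two crude a priori bounds: solving the spatially homogeneous problem $v'=-\sigma^2v^2$, $v(0)=+\infty$, yields $v(t)=1/(\sigma^2t)$ and hence $u_\infty(t,y)\le-\log\mathbb{P}_{\delta_y}(X_t(\mathbb{R})=0)=1/(\sigma^2t)$ for all $t>0$; and domination of the cumulant semigroup by the Brownian semigroup together with the standard Gaussian tail estimate gives, for $y<x$, $u_\lambda(t,y)\le\lambda\,e^{-(x-y)^2/(2t)}$.

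The key step is the supersolution
$$\bar u(t,y):=\frac{e^{3}}{\sigma^2 t}\,e^{-(x-y)/\sqrt t},\qquad 0<t\le1,\ y<x.$$
A direct differentiation shows that, writing $s:=(x-y)/\sqrt t$,
$$\partial_t\bar u-\tfrac12\partial_y^2\bar u+\sigma^2\bar u^2=\frac{e^{3}}{\sigma^2 t^{2}}\,e^{-s}\Big(\tfrac s2-\tfrac32+e^{3-s}\Big),$$
and the last factor is $\ge\tfrac12(1+\log2)>0$ for every $s\ge0$ (its minimum is attained at $s=3+\log2$), so $\bar u$ is a classical supersolution on $D:=\{0<t\le1,\ y<x\}$. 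On the parabolic boundary of $D$ one has $\bar u\ge u_\lambda$: along $\{y=x\}$, $\bar u(t,x)=e^{3}/(\sigma^2t)\ge1/(\sigma^2t)\ge u_\infty(t,x)\ge u_\lambda(t,x)$; along $\{t=0^+,\,y<x\}$, $\bar u(0^+,y)=0=u_\lambda(0,y)$, the exponential factor beating $1/t$; and as $y\to-\infty$ the Gaussian bound on $u_\lambda$ above is eventually dominated by $\bar u$. By the parabolic comparison principle (applied on the truncated domains $\{-R<y<x\}$ and letting $R\to\infty$) we get $u_\lambda\le\bar u$ on $D$, and then $u_\infty\le\bar u$ on $D$ on letting $\lambda\uparrow\infty$. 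In particular $u_\infty(1,0)\le\bar u(1,0)=(e^{3}/\sigma^2)e^{-x}$, whence $\mathbb{P}_{\delta_0}(X_1([x,\infty))>0)=1-e^{-u_\infty(1,0)}<u_\infty(1,0)\le(e^{3}/\sigma^2)e^{-x}$ for every $x>0$; one may take $C_2=e^{3}/\sigma^2$.

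The main obstacle is the construction of $\bar u$. Time-independent blow-up supersolutions, such as the stationary solution $3/(\sigma^2(x-y)^2)$, decay only polynomially in $x$, and replacing them by a minimum with an exponentially decaying Brownian solution destroys the $1/t$ singularity along $\{y=x\}$ needed to dominate the lateral boundary. The self-similar form $\bar u(t,y)=\tfrac{1}{\sigma^2 t}h\big((x-y)/\sqrt t\big)$ resolves this: the prefactor $1/t$ preserves the barrier blow-up for every profile with $h(0)\ge1$, while the profile inequality $\tfrac12h''+\tfrac\xi2h'+h-h^2\le0$ is satisfied by $h(\xi)=e^{a^{2}+2-a\xi}$ for every $a>0$, giving $u_\infty(1,0)\le\tfrac1{\sigma^2}e^{a^{2}+2-ax}$; the choice $a=1$ yields the stated bound (and $a=x/2$ would even give the Gaussian estimate $u_\infty(1,0)\le e^{2}e^{-x^{2}/4}/\sigma^2$). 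A secondary technical point is the application of the comparison principle on an unbounded domain with discontinuous initial data, handled by the monotone approximation in $\lambda$ together with the Gaussian decay of $u_\lambda$ at $-\infty$. (By contrast, the support estimates \reff{refevr12sa}--\reff{refevr12saq} seem to yield only polynomial decay here, which is why I would use the PDE route.)
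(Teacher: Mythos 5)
Your PDE argument is correct in substance and is a genuinely different route from the paper's proof. The paper bounds the (larger) probability that $X_s$ charges $[x,\infty)$ for some $s\in[0,1]$ by passing to the historical super-Brownian motion: it uses the uniform modulus of continuity of the historical supports \reff{refevr12sa}, cuts $[0,1]$ into $a(x)=\lfloor e^{x/\rho}\rfloor$ subintervals, applies the Markov property together with the Laplace-functional/first-moment formula for $H_t$ (whose mean measure is stopped Brownian motion), and finishes with Gaussian tails plus the tail bound \reff{refevr12saq} for $\Delta$; the exponential rate $e^{-x}$ comes precisely from evaluating the polynomial tail of $\Delta$ at $2/a(x)$ with $a(x)$ exponentially large -- so your parenthetical claim that the support estimates ``only yield polynomial decay'' is not accurate, though it does not affect your own argument. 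You instead work directly with the log-Laplace equation \reff{ujytas} and the self-similar supersolution $\bar u(t,y)=\frac{e^{3}}{\sigma^{2}t}e^{-(x-y)/\sqrt t}$; I checked the profile inequality ($\tfrac s2-\tfrac32+e^{3-s}\ge\tfrac12(1+\log 2)>0$), the crude bounds $u_\lambda\le 1/(\sigma^2 t)$ and $u_\lambda(t,y)\le\lambda e^{-(x-y)^2/(2t)}$, and the boundary comparisons, and they are all correct; the conclusion $u_\infty(1,0)\le e^{3}\sigma^{-2}e^{-x}$ gives the lemma with the explicit constant $C_2=e^{3}/\sigma^{2}$, and, as you note, optimizing $a$ even yields Gaussian decay, strictly stronger than what the lemma (and the paper) needs. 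The one technical point you should spell out when writing this up is the comparison principle near the corner $(t,y)=(0,x)$, where $\bar u$ is unbounded, the initial datum jumps, and the zero-order coefficient $\sigma^2(u_\lambda+\bar u)$ of the difference is a priori unbounded: the parabolic-boundary condition $\limsup(u_\lambda-\bar u)\le0$ at the corner follows by splitting according to whether $s=(x-y)/\sqrt t$ is large (then $u_\lambda\le\lambda e^{-s^2/2}$ is small) or bounded (then $\bar u\ge c/t\to\infty$), and on the set where $u_\lambda>\bar u$ one has $u_\lambda+\bar u\le 2\lambda$, so the linearized maximum principle applies; with that detail added, your proof is complete. In exchange for these analytic technicalities, your approach is self-contained given \reff{ujytas}, produces an explicit constant, and avoids the historical-process machinery; the paper's approach avoids PDE comparison arguments entirely by invoking ready-made support theory, at the cost of non-explicit constants $c,\rho,\kappa$.
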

\begin{proof}
Let
$$T:=\inf\{s\geq0:X_s([x,\infty))>0\}.$$
Define
\begin{align}
a(x):&=\lfloor e^{x/\rho}\rfloor;\cr
A(t,x):&=\left\{y(\cdot)\in C(\mathbb{R}_+):\sup_{s\in[0,t]}y(s)\geq x\right\}.\nonumber
\end{align}
Because $\lim_{x\to\infty}a(x)=+\infty$, there exists a constant $M$ such that for all $x>M$ and $|r-s|<\frac{2}{a(x)}$,
\begin{align}\label{4tewrw34sw}
c|(r-s)\log(r-s)|^{1/2}<1.
\end{align}
Since under the event $\{\exists s\in[0,1], H_s(A(s,x))>0\}$, we have $T\in(0,1]$, there exists an integer $i\in\{0,1,...,a(x)-1\}$ such that
$T\in(i/a(x),(i+1)/a(x)]$.
\par
For $i\geq1$, if $\{X_t\}_{t\geq0}$ has charged the set $[x-1,\infty)$ at time $(i-1)/a(x)$, then
$$H_{i/a(x)}(A((i-1)/a(x),x-1))>0.$$
Otherwise it has not charged the set $[x-1,\infty)$, then the support process for $\{X_t\}_{t\geq0}$ has to travel a distance of at least $1$ on time interval $[(i-1)/a(x),T]\subset[(i-1)/a(x),(i+1)/a(x)]$.
This, combined with (\ref{refevr12sa}) and (\ref{4tewrw34sw}), entails that for $x>M$,
  $$\Delta<\frac{2}{a(x)}. $$
  \par
  For $i=0$, since $\{X_t\}_{t\geq0}$ has charged the set $[x,\infty)$ on time interval $(0,1/a(x)]$, we have
  $$\Delta<\frac{1}{a(x)}.$$
\par
Putting above together, it follows that
\begin{align}\label{54gmwd2i}
\mathbb{P}_{\delta_0}\left(X_1([x,\infty))>0\right)&\leq\mathbb{P}_{\delta_0}\left(\exists s\in [0,1], X_s([x,\infty))>0\right)\cr
&=\mathbb{P}_{\delta_0}\left(\exists s\in [0,1],H_s(A(s,x))>0\right)\cr
&\leq\sum^{a(x)}_{i=1}\mathbb{P}_{\delta_0}\left(H_{i/a(x)}(A((i-1)/a(x),x-1))>0\right)+\mathbb{P}_{\delta_0}\left(\Delta<\frac{2}{a(x)}\right).
\end{align}
\par
Define $\mathcal{F}_t:=\sigma(H_s,s\in[0,t])$. Let $\{B_t\}_{t\geq0}$ be an one-dimensional Brownian motion started from  the origin. Note that $\{H_t\}_{t\geq0}$ is a time-inhomogeneous Markov process. Write $\mathbb{P}_{{s, u}}(H_{s+t}\in \cdot):=\mathbb{P}(H_{s+t}\in \cdot|H_{s}=u)$ for $s,t\geq 0$ and a measure $u$ on the space of c$\grave{\text{a}}$dl$\grave{\text{a}}$g paths from $[0,s]$ to $\mathbb{R}$; see \cite[p188]{perkins} for a more detailed description. By the Markov property of $\{H_t\}_{t\geq0}$ and \cite[p194,~Lemma III.1.2]{perkins},
\begin{align}\label{65gsw3vf}
&\mathbb{P}_{\delta_0}\left(H_{i/a(x)}(A((i-1)/a(x),x-1))>0\right)\cr
&=1-\mathbb{P}_{\delta_0}\left[\mathbb{P}_{(i-1)/a(x),H_{(i-1)/a(x)}}\left[H_{i/a(x)}(A((i-1)/a(x),x-1))=0|\mathcal{F}_{(i-1)/a(x)}\right]\right]\cr
&=1-\mathbb{E}_{\delta_0}\left[\exp\left\{-2a(x)H_{(i-1)/a(x)}(A((i-1)/a(x),x-1))\right\}\right]\cr
&\leq2a(x)\mathbb{E}_{\delta_0}\left[H_{(i-1)/a(x)}(A((i-1)/a(x),x-1))\right]\cr
&=2a(x)\mathbb{P}\left(\sup_{s\in[0,(i-1)/a(x)]}|B_s|\geq x-1\right),
\end{align}
 where the inequality follows from the fact that $1-e^{-x}\leq x$ and the last equality is because that the mean measure of $H_t$ is the Brownian motion stopped at time $t$ (i.e. one moment formula of $H_t$; see \cite[p191, II.8.5]{perkins}).
 \par
 It is well-known that $\sup_{s\in(0,t)}B_s$ and $|B_t|$ have the same law under $\mathbb{P}_{\delta_0}$. For $i\geq 2$ and $x>2$,
\begin{align}\label{54tgfuh2}
&\mathbb{P}\left(\sup_{s\in[0,(i-1)/a(x)]}|B_s|\geq x-1\right)\cr
&=\mathbb{P}\left(\sup_{s\in[0,(i-1)/a(x)]}B_s\geq(x-1)~\text{or}~\inf_{s\in[0,(i-1)/a(x)]}B_s\leq-(x-1) \right)\cr
&\leq 2\mathbb{P}\left(\sup_{s\in[0,(i-1)/a(x)]}B_s\geq(x-1) \right)\cr
&=2\mathbb{P}\left(|B_{(i-1)/a(x)}|\geq(x-1) \right)\cr
&\leq4\mathbb{P}\left(B_1\geq x-1\right)\cr
&\leq\frac{8}{x\sqrt {2\pi}}e^{-x^2/4},
\end{align}
where the last inequality follows from the following classical estimate for the standard normal
random variable:
$$
\mathbb{P}\left(B_1>x\right)\leq\frac{1}{x\sqrt {2\pi}}e^{-x^2/2}.
$$
\par
Recall that $a(x)=\lfloor e^{x/\rho}\rfloor$. Plugging (\ref{54tgfuh2}), (\ref{65gsw3vf}) into (\ref{54gmwd2i}) yields that there exists a constant $C_2>0$ such that for $x$ large enough,
\begin{align}
\mathbb{P}_{\delta_0}\left(X_1([x,\infty))>0\right)&\leq \frac{16a^2(x)}{x\sqrt {2\pi}}e^{-x^2/4}+\mathbb{P}_{\delta_0}\left(\Delta<\frac{2}{a(x)}\right)\cr
&\leq a^2(x)e^{-x^2/4}+\kappa 2^{\rho}a^{-\rho}(x)\cr
&\leq C_2e^{-|x|},\nonumber
\end{align}
where the second inequality follows from (\ref{refevr12saq}). Thus, the lemma follows.
\end{proof}
Now, we are ready to prove Theorem \ref{thdim2}. The idea of the proof is as follows. For the upper bound, from \cite[p75]{lalley2015} we know that under $\mathbb{P}_{n\delta_0},$
\begin{align}\label{9o86h6t}
\lim_{n\to\infty}\left\{\frac{Z_{\lfloor nt\rfloor}(\sqrt n\cdot)}{n}\right\}_{t\geq0}\overset{\text{weakly}}=\{X_t\}_{t\geq 0},
\end{align}
where $\{X_t\}_{t\geq 0}$ is a two-dimensional critical super-Brownian motion with branching mechanism $\psi(u)=\sigma^2u^2$ and initial value $X_0=\delta_0.$ Thus, the Laplace functional of $Z_n(\sqrt n\cdot)/n$ converges to the Laplace functional of $X_1$, which can be used to deal with the probability $\mathbb{P}_{n\delta_0}\left(Z_n(\sqrt nB(x,r))=0\right)$. By this method, we transform the empty ball problem for branching random walks to the problem for super-Brownian motions which have been studied in \cite{xz21}. For the lower bound, we mainly use the fact that the condition maximum of a critical branching random walk converges in distribution to the maximum of some measure valued process $\{Y_t\}_{t\geq0}$; see \cite[Theorem 3]{lalley2015}. Moreover, $\{Y_t\}_{t\geq0}$ is closely related to the super-Brownian motion $\{X_t\}_{t\geq 0}$. Thus, we can use Lemma \ref{gjbgk45} to obtain the desired lower bound.
\begin{proof}
\textbf{Upper bound.}
Similarly to (\ref{jitbndf12}), we have
\begin{align}\label{jitbndf12dim2}
\mathbb{P}(R_n\geq \sqrt nr)&=\exp\left\{-\int_{\mathbb{R}^2}\mathbb{P}_{\delta_x}(Z_n(B(\sqrt nr))>0)dx\right\}\cr
&=\exp\left\{-\int_{\mathbb{R}^2}n\mathbb{P}_{\delta_0}(Z_n(\sqrt nB(x,r))>0)dx\right\}.
\end{align}
Observe that
\begin{align}
\mathbb{P}_{n\delta_0}\left(\frac{1}{n}Z_n(\sqrt nB(x,r))=0\right)=\left[1-\mathbb{P}_{\delta_0}\left(Z_n(\sqrt nB(x,r))>0\right)\right]^n.\nonumber
\end{align}
Thus,
\begin{align}\label{yhgbr4f}
&\mathbb{P}_{\delta_0}\left(Z_n(\sqrt nB(x,r))>0\right)\cr
&=1-\exp\left\{\frac{1}{n}\log\mathbb{P}_{n\delta_0}\left(\frac{1}{n}Z_n(\sqrt nB(x,r))=0\right)\right\}\cr
&\geq -\frac{1}{n}\log\mathbb{P}_{n\delta_0}\left(\frac{1}{n}Z_n(\sqrt nB(x,r))=0\right)\left[1+\frac{1}{2n}\log\mathbb{P}_{n\delta_0}\left(Z_n(\sqrt nB(x,r))=0\right)\right]\cr
&= -\frac{1}{n}\log\mathbb{P}_{n\delta_0}\left(\frac{1}{n}Z_n(\sqrt nB(x,r))=0\right)\left[1+\frac{1}{2}\log\mathbb{P}_{\delta_0}\left(Z_n(\sqrt nB(x,r))=0\right)\right]\cr
&\geq -\frac{1}{n}\log\mathbb{P}_{n\delta_0}\left(\frac{1}{n}Z_n(\sqrt nB(x,r))=0\right)\left[1+\frac{1}{2}\log\mathbb{P}_{\delta_0}\left(|Z_n|=0\right)\right],
\end{align}
where the first inequality follows from the fact that $1-e^x\geq -x(1+x/2)$ for $x<0$. By (\ref{9o86h6t}), we have
\begin{align} \label{rfdwexc34}
&\liminf_{n\to\infty}-\log\mathbb{P}_{n\delta_0}\left(\frac{1}{n}Z_n(\sqrt nB(x,r))=0\right)\cr
&=\liminf_{n\to\infty}\lim_{\theta\to\infty}-\log\mathbb{E}_{n\delta_0}\left[e^{-\frac{1}{n}Z_n(\theta\sqrt nB(x,r))}\right]\cr
&\geq\lim_{\theta\to\infty}\liminf_{n\to\infty}-\log\mathbb{E}_{n\delta_0}\left[e^{-\frac{1}{n}Z_n(\theta\sqrt nB(x,r))}\right]\cr
&=\lim_{\theta\to\infty}-\log\mathbb{E}_{\delta_0}\left[e^{-X_1(\theta B(x,r))}\right]\cr
&=-\log\mathbb{P}_{\delta_0}\left(X_1(B(x,r))=0\right),
\end{align}
where the inequality follows from the fact that $-\log\mathbb{E}_{n\delta_0}\left[e^{-\frac{1}{n}Z_n(\theta\sqrt nB(x,r))}\right]$ is increasing w.r.t. $\theta$. Plugging (\ref{rfdwexc34}) and (\ref{yhgbr4f}) into (\ref{jitbndf12dim2}) yields that
\begin{align}
&\limsup_{n\to\infty}\mathbb{P}(R_n\geq \sqrt nr)\cr
&=\exp\left\{-\liminf_{n\to\infty}\int_{\mathbb{R}^2}n\mathbb{P}_{\delta_0}(Z_n(\sqrt nB(x,r))>0)dx\right\}\cr
&\leq\exp\left\{-\int_{\mathbb{R}^2}\liminf_{n\to\infty}n\mathbb{P}_{\delta_0}(Z_n(\sqrt nB(x,r))>0)dx\right\}\cr
&\leq\exp\left\{-\int_{\mathbb{R}^2}\liminf_{n\to\infty} -\log\mathbb{P}_{n\delta_0}\left(\frac{1}{n}Z_n(\sqrt nB(x,r))=0\right)\left[1+\frac{1}{2}\log\mathbb{P}_{\delta_0}\left(|Z_n|=0\right)\right]dx\right\}\cr
&\leq\exp\left\{\int_{\mathbb{R}^2}\log\mathbb{P}_{\delta_0}\left(X_1(B(x,r))=0\right)dx\right\},
\end{align}
where the first inequality follows from Fatou's Lemma.\\

\smallskip
\noindent\textbf{Lower bound.} Fix $\delta>0$. Note that
\begin{align}\label{fbfbfgjg34}
&\mathbb{P}(R_n\geq \sqrt nr)\cr
&=\exp\left\{-\int_{\mathbb{R}^2}n\mathbb{P}_{\delta_0}(Z_n(\sqrt nB(x,r))>0)dx\right\}\cr
&\geq\exp\left\{-\int_{|x|\leq(1+\delta)r}n\mathbb{P}_{\delta_0}(|Z_n|>0)dx-\int_{|x|>(1+\delta)r}n\mathbb{P}_{\delta_0}(Z_n(\sqrt nB(x,r))>0)dx\right\}.
\end{align}
The first term on the right hand side of (\ref{fbfbfgjg34}) can be well-handled by Lemma \ref{extinpro1}.
\par
 In the following, we are going the deal with the second term. For a vector $x$, denote by $x^{(i)}$ the $i-$th component of $x$. Let $M_n:=\max\{|S_u|:u\in Z_n\}$, $M'_n:=\max\{|S^{(1)}_u|:u\in Z_n\}$, $M''_n:=\max\{S^{(1)}_u: u\in Z_n\}$. Without loss of generality, we assume that $X^{(1)}$ and $X^{(2)}$ have some common symmetric distribution. If not the case, one can make slight modifications to let the following arguments carry through. Note that for $|x|>(1+\delta)r$,
\begin{align}
\mathbb{P}_{\delta_0}(Z_n(\sqrt nB(x,r))>0)&\leq \mathbb{P}_{\delta_0}(M_n>\sqrt n(|x|-r))\cr
&\leq 2\mathbb{P}_{\delta_0}(M'_n>\sqrt n(|x|-r))\cr
&\leq 4\mathbb{P}_{\delta_0}(M''_n>\sqrt n(|x|-r)),\nonumber
\end{align}
where the second inequality follows from the fact that $X^{(1)}$ and $X^{(2)}$ have the same law and the last inequality is because that the step size is symmetric. So, it yields that
\begin{align}\label{54g4d23d}
&\int_{|x|>(1+\delta)r}n\mathbb{P}_{\delta_0}(Z_n(\sqrt nB(x,r))>0)dx\cr
&\leq 4\int_{|x|>(1+\delta)r} n\mathbb{P}_{\delta_0}(M''_n>\sqrt n(|x|-r))dx\cr
&\leq 4n\mathbb{P}_{\delta_0}(|Z_n|>0)\int_{|x|>(1+\delta)r} \mathbb{P}_{\delta_0}\left(M''_n>\sqrt n(|x|-r)\Big{|}|Z_n|>0\right)dx.
\end{align}
From \cite[Corollary 4]{lalley2015}, we have
\begin{align}\label{4f3w2e23}
\lim_{n\to\infty}\mathbb{P}_{\delta_0}(M''_n\geq\sqrt n(|x|-r)||Z_n|>0)=\mathbb{P}_{\delta_0}(Y_1([|x|-r,\infty))>0),
\end{align}
where $\{Y_t\}_{t\geq 0}$ is an $1$-dimensional measure-valued process. Moreover, $\{Y_t\}_{t\geq 0}$ satisfies
\begin{align}\label{4f3w2e2323}
\sum^N_{i=1}Y^i_t\overset{\text{law}}= X_t,
\end{align}
where $N$ is a Poisson random variable with mean $1$ and $Y^i_t$, $i\geq1$ are independent copies of $Y_t$. According to \cite[Theorem 1.1]{Kesten1995}, there exists a constant $C_3>0$ such that
\begin{align}
\mathbb{P}_{\delta_0}(M''_n\geq\sqrt n(|x|-r)||Z_n|>0)\leq C_3\frac{1+(\log(|x|-r))^2}{(|x|-r)^4},\nonumber
\end{align}
which is integrable on $\{|x|>(1+\delta)r\}\subset\mathbb{R}^2$. Hence, one can use Fatou lemma, (\ref{54g4d23d}) and (\ref{4f3w2e23}) to obtain that
\begin{align}\label{yngng34b}
&\limsup_{n\to\infty}\int_{|x|>(1+\delta)r}n\mathbb{P}_{\delta_0}(Z_n(\sqrt nB(x,r))>0)dx\cr
&\leq \frac{8}{\sigma^2}\int_{|x|>(1+\delta)r}\mathbb{P}_{\delta_0}\left(Y_1([|x|-r))>0\right)dx.
\end{align}
\par
On the other hand, by (\ref{4f3w2e2323}),
\begin{align}\label{fbgfgb6}
\mathbb{P}_{\delta_0}\left(X_1([|x|-r,\infty))>0\right)&=1-\mathbb{P}_{\delta_0}\left(\sum^N_{i=1}Y^i_1([|x|-r,\infty))=0\right)\cr
&=1-\sum_{k\geq0}\frac{e^{-1}}{k!}\left(\mathbb{P}_{\delta_0}\left(Y_1([|x|-r,\infty))=0\right)\right)^k\cr
&=1-e^{-\mathbb{P}_{\delta_0}\left(Y_1([|x|-r,\infty))>0\right)}.
\end{align}
Note that there exists $C_\delta\in(0,1)$ such that
$$-\log(1-x)<(1+\delta)x,~x\in(0,C_{\delta}).$$ According to Lemma \ref{gjbgk45}, there exists $C'_{\delta}>0$ such that for $|x|\geq (1+\delta)r$ and $r>C'_{\delta}$,
\begin{align}
\mathbb{P}_{\delta_0}\left(X_1([|x|-r,\infty))>0\right)&\leq\mathbb{P}_{\delta_0}\left(X_1([\delta r,\infty))>0\right)<C_{\delta}.\nonumber
\end{align}
This, combined with (\ref{fbgfgb6}), yields that for $|x|\geq (1+\delta)r$ and $r>C'_{\delta}$,
\begin{align}
\mathbb{P}_{\delta_0}\left(Y_1([|x|-r,\infty))>0\right)&=-\log\left[1-\mathbb{P}_{\delta_0}\left(X_1([|x|-r,\infty))>0\right)\right]\cr
&\leq (1+\delta)\mathbb{P}_{\delta_0}\left(X_1([|x|-r,\infty))>0\right)\cr
&\leq C_2(1+\delta) e^{-(|x|-r)},\nonumber
\end{align}
where the last inequality follows from Lemma \ref{gjbgk45}. Plugging above into (\ref{yngng34b}) yields that
\begin{align}
&\limsup_{n\to\infty}\int_{|x|>(1+\delta)r}n\mathbb{P}_{\delta_0}(Z_n(\sqrt nB(x,r))>0)dx\cr
&\leq\int_{|x|>(1+\delta)r} \frac{8}{\sigma^2}C_2(1+\delta) e^{-(|x|-r)}dx\cr
&= \frac{8}{\sigma^2}C_2(1+\delta)\int^{\infty}_{\delta r} (u+r)e^{-u}du\cr
&\leq  \frac{8}{\sigma^2}C_2(1+\delta)(1+1/\delta)e^{-\delta r/2}.\nonumber
\end{align}
\par
The above inequality, together with (\ref{fbfbfgjg34}) and Lemma \ref{extinpro1}, yields that for any $\delta>0$,
\begin{align}
\liminf_{n\to\infty}\mathbb{P}(R_n\geq \sqrt nr)\geq e^{- \frac{4\pi(1+\delta)^2r^2}{\sigma^2}- \frac{8}{\sigma^2}C_2(1+\delta)(1+1/\delta)e^{-\delta r/2}}.\nonumber
\end{align}
Thus, the desired lower bound follows by letting $C(\delta):=\frac{8}{\sigma^2}C_2(1+\delta)(1+1/\delta)$.
\end{proof}

\section{Proof of Theorem \ref{thdim3}: $m=1$, $d\geq3$}\label{secthdimddayu3}

The first result below tells us that for any $d\geq1$, $R_n$ converges in law. The result is universal for any critical and subcritical branching random walk.
\begin{proposition}\label{pro1}Assume $m\leq 1$ and $d\geq 1$. Then, there exists a function $F_d(r)\in[0,+\infty]$ such that for  $r>0$ ,
$$
\lim_{n\to\infty}\mathbb{P}\left(R_n\geq r\right)=e^{-F_d(r)}\in[0,1].
$$
\end{proposition}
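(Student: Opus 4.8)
The plan is to reduce the statement to the same Poisson computation as in \reff{jitbndf12}. By the Laplace functional of the Poisson random measure $Z_0$ together with the branching property, for every $r>0$ and $n\geq 0$,
$$\mathbb{P}(R_n\geq r)=\exp\left\{-v_n(r)\right\},\qquad v_n(r):=\int_{\mathbb{R}^d}w_n(x)\,dx,\qquad w_n(x):=\mathbb{P}_{\delta_x}\big(Z_n(B(r))>0\big),$$
exactly as in the derivation of \reff{jitbndf12} (with $B(nr)$ replaced by the fixed ball $B(r)$). Hence it suffices to prove that $v_n(r)$ converges in $[0,\infty]$ as $n\to\infty$, and then set $F_d(r)$ equal to the limit. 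Since $w_0=\ind_{B(r)}$ we have $v_0(r)=v_d(r)<\infty$, so in fact I expect to obtain $F_d(r)\in[0,v_d(r)]$, which is stronger than the assertion and consistent with the upper bound in Theorem \ref{thdim3}.

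First I would set up a recursion for $w_n$. Conditioning on the first generation — the root at $x$ dies, produces $N$ offspring with i.i.d.\ displacements distributed as $X$, and each offspring launches an independent branching random walk of length $n$ — one gets
$$w_{n+1}(x)=1-f\big(1-(Pw_n)(x)\big)=\Phi\big((Pw_n)(x)\big),\qquad \Phi(s):=1-f(1-s),$$
where $P$ is the one-step transition operator of the random walk $\{W_n\}_{n\geq0}$, i.e.\ $(Pg)(x)=\mathbb{E}_x[g(W_1)]$, and $f$ is the generating function of the offspring law. This is legitimate because $w_n$ takes values in $[0,1]$ and $P$ is a Markov kernel, so $Pw_n\in[0,1]$ and $\Phi(Pw_n)$ is well defined.

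The key point is a one-step contraction estimate for the \emph{integral}. The function $\Phi$ is concave on $[0,1]$ (as $f$ is convex), with $\Phi(0)=0$ and $\Phi'(0)=f'(1)=m$, hence $\Phi(s)\leq ms$ for all $s\in[0,1]$. Moreover Lebesgue measure is invariant for the random walk, i.e.\ $\int_{\mathbb{R}^d}(Pg)(x)\,dx=\int_{\mathbb{R}^d}g(x)\,dx$ for nonnegative measurable $g$ (Fubini plus translation invariance of the step law). Combining,
$$v_{n+1}(r)=\int_{\mathbb{R}^d}\Phi\big((Pw_n)(x)\big)\,dx\leq m\int_{\mathbb{R}^d}(Pw_n)(x)\,dx=m\,v_n(r).$$
If $m<1$ this gives $v_n(r)\leq m^n v_d(r)\to0$, so $F_d(r)=0$ and $\mathbb{P}(R_n\geq r)\to1$. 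If $m=1$ it gives $v_{n+1}(r)\leq v_n(r)$, so $\{v_n(r)\}_{n\geq0}$ is non-increasing and bounded below by $0$, hence converges to some $v_\infty(r)\in[0,v_d(r)]$. In either case the limit exists, and one sets $F_d(r):=\lim_{n\to\infty}v_n(r)$.

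There is no real obstacle here — the content of this proposition is soft. The only points needing a little care are (i) justifying the Poisson Laplace-functional identity and the attendant interchanges of product/integral, which require only $w_n\geq0$ and $v_0(r)<\infty$; and (ii) matching the first-generation decomposition to the ``die, then displace'' convention of the model, so that the displacement operator $P$ appears \emph{inside} $\Phi$ as $\Phi(Pw_n)$; this is the natural order, and it is precisely the order that lets the estimate $\Phi(s)\leq ms$ interact cleanly with the invariance $\int P=\mathrm{id}$ on integrals. The genuinely delicate issues — that $F_d(r)>0$ for $d\geq3$ and the matching lower bound — are not part of this proposition and are handled afterwards via Paley--Zygmund and Markov inequalities.
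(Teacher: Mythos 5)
Your argument is correct, and it reaches the conclusion by a cleaner route than the paper's. Both proofs start from the same Poisson--Laplace reduction $\mathbb{P}(R_n\geq r)=\exp\{-\int_{\mathbb{R}^d}\mathbb{P}_{\delta_x}(Z_n(B(r))>0)\,dx\}$ and both extract convergence of the exponent from the one-step branching recursion together with translation invariance of Lebesgue measure; the difference lies in which quantity is shown to be monotone. The paper introduces the truncated log-Laplace functionals $u_\theta(n,x)=-\log\mathbb{E}_{\delta_x}[e^{-\theta Z_n(B(r))}]$, proves $\int u_\theta(n,x)\,dx\leq\int u_\theta(n-1,x)\,dx$ using $f(s)\geq s$ plus Jensen's inequality, lets $\theta\to\infty$ to get monotonicity of $\int u(n,x)\,dx$ with $u(n,x)=-\log\mathbb{P}_{\delta_x}(Z_n(B(r))=0)$, and then needs an additional sandwich step, based on $y-\tfrac{y^2}{2}<1-e^{-y}<y$ and $c_n:=-\log\mathbb{P}_{\delta_0}(|Z_n|=0)\to0$ (i.e.\ almost sure extinction), to transfer this to the actual exponent $\int(1-e^{-u(n,x)})\,dx$. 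You instead work directly with $w_n(x)=\mathbb{P}_{\delta_x}(Z_n(B(r))>0)$, whose recursion $w_{n+1}=\Phi(Pw_n)$, $\Phi(s)=1-f(1-s)$, combined with the tangent-line bound $\Phi(s)\leq ms$ (convexity of $f$ at $1$) and $\int Pg=\int g$, gives $v_{n+1}(r)\leq m\,v_n(r)$ at once; this bypasses the $\theta$-truncation, Jensen, and the sandwich/extinction input entirely, and as a bonus yields $F_d(r)\leq v_d(r)<\infty$ for all $d$, the explicit value $F_d(r)=0$ when $m<1$, and the exact identity $F_d(r)=\lim_n v_n(r)$ that the later theorems use. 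The only points to keep explicit in a final write-up are the ones you already flag: the branching-then-displacement order in the model is what puts $P$ inside $\Phi$, and the recursion is applied at the level of the event $\{Z_{n+1}(B(r))=0\}$ (intersection over first-generation subtrees), which is legitimate by the branching property without any passage through finite $\theta$.
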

\begin{proof} Similarly to (\ref{jitbndf12}), we have
\begin{align}\label{embr1}
\mathbb{P}(R_n\geq r)&=\mathbb{P}\left(Z_n(B(r))=0\right)\cr
&=e^{-\int_{\mathbb{R}^d} 1-\mathbb{P}_{\delta_{x}}(Z_n(B(r))=0)dx}.
\end{align}
For $\theta>0$, write $$u_{\theta}(k,x):=-\log\mathbb{E}_{\delta_x}\left[e^{-\theta Z_k(B(r))}\right],~k\geq1.$$
Since $u_{\theta}(k,x)$ is increasing w.r.t. $\theta$, let $u(k,x):=\lim_{\theta\to\infty}u_{\theta}(k,x)$. Observe that
\begin{align}\label{embr2}
\mathbb{P}_{\delta_x}(Z_n(B(r))=0)&=\lim_{\theta\to\infty}\mathbb{E}_{\delta_x}\left[e^{-\theta Z_n(B(r))}\right]\cr
&=\lim_{\theta\to\infty}e^{-u_{\theta}(n,x)}\cr
&=e^{-u(n,x)}.
\end{align}
This, combined with (\ref{embr1}), gives
\begin{align}
\mathbb{P}(R_n\geq r)=\exp\left\{-\int_{\mathbb{R}^d}1-e^{-u(n,x)}dx\right\}.\nonumber
\end{align}
\par
Note that for all $x\in\mathbb{R}^d$ and $n\geq 1$,
\begin{align}\label{fghwra}
u(n,x)&=-\log\mathbb{P}_{\delta_x}(Z_n(B(r))=0)\cr
&\leq -\log\mathbb{P}_{\delta_0}(|Z_n|=0)\cr
&=:c_n.
\end{align}
Since $y-\frac{y^2}{2}<1-e^{-y}<y$ for $y>0$, we have
\begin{align}\label{dsgwerq}
\exp\left\{-\int_{\mathbb{R}^d}u(n,x)dx\right\}&<\mathbb{P}(R_n\geq r)\cr
&<\exp\left\{-\int_{\mathbb{R}^d}u(n,x)-\frac{u^2(n,x)}{2}dx\right\}\cr
&\leq\exp\left\{-\left(1-\frac{c_n}{2}\right)\int_{\mathbb{R}^d}u(n,x)dx\right\}.
\end{align}
\par
Recall that $f(s)=\sum_{k\geq 0}p_ks^k$. By the branching property,
\begin{align}
e^{-u_{\theta}(n,x)}&=\mathbb{E}_{\delta_x}\left[e^{-\theta Z_n(B(r))}\right]\cr
&=\mathbb{E}_{\delta_x}\left[e^{-\theta \sum_{u\in Z_1}Z^{\delta_{S_u}}_{n-1}(B(r))}\right]\cr
&=\sum_{k\geq 0}p_k\mathbb{E}^k_{\delta_x}\left[e^{-\theta Z^{\delta_{W_1}}_{n-1}(B(r))}\right]\cr
&=\sum_{k\geq0}p_k\mathbb{E}^k_{x}\left[e^{-u_{\theta}(n-1,W_1)}\right]\cr
&=f\left(\mathbb{E}_{x}\left[e^{-u_{\theta}(n-1,W_1)}\right]\right)\cr
&\geq \mathbb{E}_{x}\left[e^{-u_{\theta}(n-1,W_1)}\right]\cr
&\geq e^{-\mathbb{E}_{x}\left[u_{\theta}(n-1,W_1)\right]},\nonumber
\end{align}
where the first inequality follows by the fact that $f(s)\geq s$ (see \cite[p4]{r1}) and the second inequality follows from the Jensen inequality. Thus, this yields
$$
u_{\theta}(n,x)\leq\mathbb{E}_{x}\left[u_{\theta}(n-1,W_1)\right]
$$
By Fubini's theorem, it follows that
\begin{align}
\int_{\mathbb{R}^d}u_{\theta}(n,x)dx&\leq\int_{\mathbb{R}^d}\mathbb{E}_{x}\left[u_{\theta}(n-1,W_1)\right]dx\cr
&=\mathbb{E}_{0}\left[\int_{\mathbb{R}^d}u_{\theta}(n-1,x+X)dx\right]\cr
&=\int_{\mathbb{R}^d}u_{\theta}(n-1,x)dx.\nonumber
\end{align}
Since $u_{\theta}(n,x)$ is increasing w.r.t. $\theta$, by L\'evy's monotone convergence lemma, the above yields
$$
\int_{\mathbb{R}^d}u(n,x)dx\leq\int_{\mathbb{R}^d}u(n-1,x)dx.
$$
Therefore, by the monotone convergence theorem $\lim_{n\to\infty} e^{-\int_{\mathbb{R}^d}u(n,x)dx}$ exists.
 \par
On the other hand, using the assumption that the branching process is non-supercritical,  we have $\lim_{n\to\infty}c_n=0$ (see (\ref{fghwra})). This, together with (\ref{dsgwerq}), entails that
$$\lim_{n\to\infty}\mathbb{P}\left(R_n\geq r\right)=\lim_{n\to\infty} e^{-\int_{\mathbb{R}^d}u(n,x)dx}~\text{exists}.$$

\end{proof}

Now, we are ready to prove Theorem \ref{thdim3}: if $m=1$, $\sigma^2<\infty$, $d\geq3$ and $\mathbb{E}[|X|^3]<\infty$, then

$$\lim_{n\to\infty}\mathbb{P}\left(R_n\geq r\right)=e^{-F_d(r)}\in(0,1).$$
Moreover,
\begin{align}\label{4rtrer3}
\frac{v_d(r)}{1+\sigma^2 C_d(r)r^2}\leq F_d(r)\leq v_d(r).
\end{align}
\par
By Proposition \ref{pro1}, it suffices to show (\ref{4rtrer3}). The idea of the proof is as follows. By the second moment method, to give an upper bound of $\mathbb{P}\left(R_n\geq r\right)$, it suffices to deal with the probability that two correlated random walks are in some fixed ball at time $n$. This probability can be well-handled by the Berry-Esseen inequality. We obtain the lower bound by the one moment estimation.
\begin{proof}
\textbf{Upper bound.}
Similarly to (\ref{jitbndf12}), one sees that
\begin{align}\label{dfev23}
\mathbb{P}(R_n\geq r)&=\exp\left\{-\int_{\mathbb{R}^d}\mathbb{P}_{\delta_x}(Z_n(B(r))>0)dx\right\}.
\end{align}
By the Paly-Zygmund inequality, we have
\begin{align}\label{65y4r3}
\mathbb{P}_{\delta_x}(Z_n(B(r))>0)&\geq\frac{\mathbb{E}_{\delta_x}^2[Z_n(B(r))]}{\mathbb{E}_{\delta_x}[Z^2_n(B(r))]}\cr
&=\frac{\mathbb{P}(|x+W_n|<r)^2}{\mathbb{E}_{\delta_x}[Z^2_n(B(r))]},
\end{align}
where the equality follows from the independent of the branching and the motion. It is simple to see that
\begin{align}\label{egert4}
\mathbb{E}_{\delta_x}[Z^2_n(B(r))]&=\mathbb{E}_{\delta_x}\Big[\Big(\sum_{u\in Z_n}\ind_{\{S_u\in B(r)\}}\Big)\Big(\sum_{v\in Z_n}\ind_{\{S_v\in B(r)\}}\Big)\Big]\cr
&=\mathbb{E}_{\delta_x}\Bigg[\sum_{u\in Z_n}\ind_{\{S_u\in B(r)\}}+\sum_{u, v\in Z_n\atop u\neq v}\ind_{\{S_u\in B(r)\}}\ind_{\{S_v\in B(r)\}}\Bigg]\cr
&=\mathbb{P}(|x+W_n|<r)+\mathbb{E}_{\delta_x}\Bigg[\sum_{u, v\in Z_n\atop u\neq v}\ind_{\{S_u\in B(r)\}}\ind_{\{S_v\in B(r)\}}\Bigg].
\end{align}
\par
In the next, we are going to calculate $\mathbb{E}_{\delta_x}\Bigg[\sum\limits_{u, v\in Z_n\atop u\neq v}\ind_{\{S_u\in B(r)\}}\ind_{\{S_v\in B(r)\}}\Bigg].$ For $j$, $i\geq0$ and~$v\in Z_i$, let
\begin{align}
&Z^v_j:=\{u\in Z_{i+j}: u~\text{is~a~progeny~of}~v\}.\nonumber
\end{align}
The notation $u\sim_{k}v$ means the most recent common ancestor of $u$ and $v$ is in generation $k$.
Let $\mathcal{F}_n:=\sigma(|Z_i|,i\leq n)$. Recall that $W_n=X_1+X_2+....+X_n$, where $X_i$, $i\geq1$ are independent copies of the step size $X$. For $0\leq k\leq n-1$, let
$$W_n(k):=X_1+X_2+...+X_k+X'_{k+1}+X'_{k+2}+...+X'_{n},$$
where $X'_i$, $i\geq1$ are independent copies of the step size $X$ and independent of $X_i$, $i\geq1$. Since the branching and spatial motion are independent, we have
\begin{align}\label{yten12}
&\mathbb{E}_{\delta_x}\Bigg[\sum_{u, v\in Z_n\atop u\neq v}\ind_{\{S_u\in B(r)\}}\ind_{\{S_v\in B(r)\}}\Bigg]\cr
&=\mathbb{E}_{\delta_x}\Bigg[\sum^{n-1}_{k=0}\sum_{u, v\in Z_n\atop u\neq v,u\sim_{k}v}\ind_{\{S_u\in B(r)\}}\ind_{\{S_v\in B(r)\}}\Bigg]\cr
&=\sum^{n-1}_{k=0}\mathbb{E}_{\delta_x}\Bigg[\sum_{u, v\in Z_n\atop u\neq v,u\sim_{k}v}\ind_{\{S_u\in B(r)\}}\ind_{\{S_v\in B(r)\}}\Bigg]\cr
&=\sum^{n-1}_{k=0}\mathbb{E}_{\delta_x}\left[\sum_{\varsigma\in Z_k}\sum_{w,w'\in Z^{\varsigma}_1, w\neq w'\atop
u\in Z^w_{n-k-1},v\in Z^{w'}_{n-k-1}}\ind_{\{S_u\in B(r)\}}\ind_{\{S_v\in B(r)\}}\right]\cr
&=\sum^{n-1}_{k=0}\mathbb{E}_{\delta_x}\left[\mathbb{E}_{\delta_x}\left[\sum_{\varsigma\in Z_k}\sum_{w,w'\in Z^{\varsigma}_1, w\neq w'\atop
u\in Z^w_{n-k-1},v\in Z^{w'}_{n-k-1}} \ind_{\{S_u\in B(r)\}}\ind_{\{S_v\in B(r)\}}\Bigg|\mathcal{F}_n\right]\right]\cr
&=\sum^{n-1}_{k=0}\mathbb{P}_{\delta_x}\left(W_n\in B(r),W_n(k)\in B(r)\right)\mathbb{E}_{\delta_x}\left[\sum_{\varsigma\in Z_k}\sum_{w,w'\in Z^{\varsigma}_1, w\neq w'\atop
u\in Z^w_{n-k-1},v\in Z^{w'}_{n-k-1}}1\right]\cr
&=\sum^{n-1}_{k=0}\mathbb{P}_{\delta_x}\left(W_n\in B(r),W_n(k)\in B(r)\right)\mathbb{E}_{\delta_x}\left[\sum_{\varsigma\in Z_k}
(|Z^{\varsigma}_1|-1)|Z^{\varsigma}_1||Z^w_{n-k-1}||Z^{w'}_{n-k-1}|\right]\cr
&=\left[\mathbb{E}_{\delta_0}\left[|Z_1|^2\right]-1\right]\sum^{n-1}_{k=0}\mathbb{P}\left(W_n\in B(-x,r),W_n(k)\in B(-x,r)\right),
\end{align}
where the last inequality follows from the fact that $|Z_k|,~|Z^{\varsigma}_1|,~|Z^w_{n-k-1}|~\text{and}~|Z^{w'}_{n-k-1}|$ are independent. It suffices to give an upper bound of the second factor of the right hand side of (\ref{yten12}).

Let $\{\xi_n\}_{n\geq 1}$ be an one-dimensional random walk satisfying $\mathbb{E}[\xi_1]=0,~\mathbb{E}\left[|\xi_1|^3\right]<
\infty$. Recall that $\{B_t\}_{t\geq0}$ is a standard Brownian motion. From \cite[p542]{Feller}, we have the following Berry-Esseen inequality. For any interval $A\subset \mathbb{R}$ and $n\geq 1$,
\begin{align}\label{revrd34}
\left|\mathbb{P}\left(\xi_n/\sqrt n\in A\right)-\mathbb{P}(B_1\in A)\right|\leq\frac{6\mathbb{E}\left[|\xi_1|^3\right]}{\mathbb{E}\left[|\xi_1|^2\right]^{3/2}\sqrt n}.
\end{align}
 Recall that $x^{(i)},~1\leq i\leq d$ stands for the $i$-th component of a $d$-dimensional vector $x$. Let $C_4:=6\mathbb{E}[|X^{(1)}|^3]/(\mathbb{E}[|X^{(1)}|^2]^{3/2})$. Then by (\ref{revrd34}), for any $0\leq k\leq n-1$, $x\in\mathbb{R}^d$ and $r>0$,

\begin{align}
\mathbb{P}\left(W_{n-k}\in B(-x,r)\right)&=\mathbb{P}\left(\left|\frac{W_{n-k}+x}{\sqrt{ n-k}}\right|< \frac{r}{\sqrt {n-k}}\right)\cr
&\leq\prod_{i=1}^d\mathbb{P}\left(\left|\frac{W^{(i)}_{n-k}+x^{(i)}}{\sqrt{ n-k}}\right|< \frac{r}{\sqrt {n-k}}\right)\cr
&\leq \prod_{i=1}^d\left[\mathbb{P}\left(\left|B_1+\frac{x^{(i)}}{\sqrt {n-k}}\right|< \frac{r}{\sqrt {n-k}}\right)+\frac{C_4}{\sqrt {n-k}}\right]\cr
&\leq\prod_{i=1}^d\left[\int_{|y+x^{(i)}/\sqrt {n-k}|< \frac{r}{\sqrt {n-k}}}\frac{1}{\sqrt {2\pi}}e^{-y^2/2}dy+\frac{C_4}{\sqrt {n-k}}\right]\cr
&\leq\frac{(r+C_4)^d}{(n-k)^{d/2}}.\nonumber
\end{align}
Thus,
\begin{align}
\mathbb{P}\left(W_n\in B(-x,r),W_n(k)\in B(-x,r)\right)&=\int_{\mathbb{R}^d}\mathbb{P}^2\left(W_{n-k}\in B(-x-y,r)\right)\mathbb{P}\left(W_k\in dy\right)\cr
&\leq\left[\frac{(r+C_4)^d}{(n-k)^{d/2}}\wedge1\right]\mathbb{P}\left(W_n\in B(-x,r)\right).\nonumber
\end{align}
Hence,
\begin{align}\label{yh6}
&\sum^{n-1}_{k=0}\mathbb{P}\left(W_n\in B(-x,r),W_n(k)\in B(-x,r)\right)\cr
&\leq \mathbb{P}\left(W_n\in B(-x,r)\right)\left[\sum_{0\leq k< n-r^2}\frac{(r+C_4)^d}{(n-k)^{d/2}}+\sum_{n-r^2\leq k\leq n-1}1\right]\cr
&\leq\mathbb{P}\left(W_n\in B(-x,r)\right)\left[(r+C_4)^d\int^{n-r^2}_{0}\frac{1}{(n-u)^{d/2}}du+r^2\right]\cr
&\leq\mathbb{P}\left(W_n\in B(-x,r)\right)\left[\frac{2}{d-2}(r+C_4)^dr^{2-d}+r^2\right]\cr
&= C_d(r)r^2\mathbb{P}\left(W_n\in B(-x,r)\right),
\end{align}
where $C_d(r):=\frac{2(r+C_4)^d}{(d-2)r^d}+1$. Plugging (\ref{yh6}) and (\ref{yten12}) into (\ref{egert4}) yields that
$$ \mathbb{E}_{\delta_x}[Z^2_n(B(r))]\leq \mathbb{P}(|x+W_n|<r)+\sigma^2 C_d(r)r^2\mathbb{P}\left(|x+W_n|<r\right).$$
Therefore, by (\ref{65y4r3})
\begin{align}
\mathbb{P}_{\delta_x}(Z_n(B(r))>0)
&\geq\frac{\mathbb{P}(|x+W_n|<r)^2}{\mathbb{P}(|x+W_n|<r)+\sigma^2 C_d(r)r^2\mathbb{P}\left(|x+W_n|<r\right)}\cr
&= \frac{\mathbb{P}(|x+W_n|<r)}{1+\sigma^2 C_d(r)r^2}.\nonumber
\end{align}
Thus, by Fubini's theorem, it follows that
\begin{align}
\int_{\mathbb{R}^d}\mathbb{P}_{\delta_x}(Z_n(B(r))>0)dx&\geq \frac{1}{1+\sigma^2 C_d(r)r^2}\int_{\mathbb{R}^d}\mathbb{P}(|x+W_n|<r)dx\cr
&\geq \frac{1}{1+\sigma^2 C_d(r)r^2}\int_{\mathbb{R}^d}\int_{|x+y|<r}\mathbb{P}(W_n\in dy)dx\cr
&= \frac{1}{1+\sigma^2 C_d(r)r^2}\int_{\mathbb{R}^d}\int_{|x+y|<r}dx\mathbb{P}(W_n\in dy)\cr
&= \frac{v_d(r)}{1+\sigma^2 C_d(r)r^2}.\nonumber
\end{align}
Hence, by (\ref{dfev23}), we obtain
$$
\limsup_{n\to\infty}\mathbb{P}(R_n\geq r)\leq e^{-\frac{v_d(r)}{1+\sigma^2 C_d(r)r^2}}.
$$

\smallskip
\noindent\textbf{Lower bound.}
By Markov inequality, it is easy to see that
\begin{align}
\int_{\mathbb{R}^d}\mathbb{P}_{\delta_x}(Z_n(B(r))>0)dx&\leq\int_{\mathbb{R}^d}\mathbb{E}_{\delta_x}[Z_n(B(r))]dx\cr
&=\int_{\mathbb{R}^d}\mathbb{P}\left(|W_n+x|< r\right)dx\cr
&=\int_{\mathbb{R}^d}\int_{\mathbb{R}^d}\ind_{\{|y+x|< r\}}\mathbb{P}(W_n\in dy)dx\cr
&=\int_{\mathbb{R}^d}\int_{\mathbb{R}^d}\ind_{\{|y+x|< r\}}dx\mathbb{P}(W_n\in dy)\cr
&=v_d(r),\nonumber
\end{align}
where in the second last equality we use Fubini's theorem. Thus, by (\ref{dfev23}),
$$
\liminf_{n\to\infty}\mathbb{P}(R_n\geq r)\geq e^{-v_d(r)}.
$$
\end{proof}
\section{Proof of Theorem \ref{thdimsgimawuqiong}: $m=1$, $\sigma^2=\infty$}\label{secsigmawuqiong}
The proof of this theorem is similar to that for Theorem \ref{thdim1} in spirit.
\begin{proof}
\textbf{Upper bound.}
Recall that in Theorem  \ref{thdimsgimawuqiong}, we have defined
$$b_n=\Big[nL\big(\mathbb{P}_{\delta_0}(|Z_n|>0)\big)\Big]^{\frac{1}{\beta d}}.$$
By Lemma \ref{extinpro1},
\begin{align}\label{extinpro1sigma}
\lim_{n\to\infty}\mathbb{P}_{\delta_0}(|Z_n|>0)b^{d}_n={(\beta^{-1})}^{\beta^{-1}}.
\end{align}
Similarly to (\ref{jitbndf12}), we have
\begin{align}\label{tgbtsg4}
\mathbb{P}(R_n\geq b_nr)&=\exp\left\{-\int_{\mathbb{R}^d}\mathbb{P}_{\delta_x}(Z_n(B(b_nr))>0)dx\right\}.
\end{align}
Since $L(\cdot)$ varies slowly at $0$, according to \cite[p277, Lemma 2]{Feller}, for any $\varepsilon>0$ and $n$ large enough,
\begin{align}\label{4tfs3e3}
(\mathbb{P}_{\delta_0}(|Z_n|>0))^{\varepsilon}<L\big(\mathbb{P}_{\delta_0}(|Z_n|>0)\big)<(\mathbb{P}_{\delta_0}(|Z_n|>0))^{-\varepsilon}.
\end{align}
Thus, by (\ref{extinpro1sigma}), for $\varepsilon\in\left(0,\beta \right)$ and $n$ large enough,
\begin{align}
\varepsilon&\leq\mathbb{P}_{\delta_0}(|Z_n|>0)\left[n(\mathbb{P}_{\delta_0}(|Z_n|>0))^{-\varepsilon}\right]^{\frac{1}{\beta}}\cr
&=(\mathbb{P}_{\delta_0}(|Z_n|>0))^{1-\frac{\varepsilon}{\beta}}n^{\frac{1}{\beta}}\nonumber.
\end{align}
This, together with (\ref{4tfs3e3}), implies
\begin{align}\label{efr45v}
b_n&=\Big[nL\big(\mathbb{P}_{\delta_0}(|Z_n|>0)\big)\Big]^{\frac{1}{\beta d}}\cr
&\geq n^{\frac{1}{\beta d}}(\mathbb{P}_{\delta_0}(|Z_n|>0))^{\frac{\varepsilon}{\beta d}}\cr
&\geq n^{\frac{1}{\beta d}}(\varepsilon n^{-\frac{1}{\beta}})^{\frac{\varepsilon}{(\beta-\varepsilon) d}}.
\end{align}
Since $d<\frac{1}{\beta}$ and $\varepsilon$ can be arbitrary small, (\ref{efr45v}) yields
\begin{align}\label{354rde}
\lim_{n\to\infty}\frac{n}{b_n}=0.
\end{align}
Fix $\delta\in(0,r)$. By the law of large numbers,
\begin{align}\label{56yyq}
\lim_{n\to\infty}\mathbb{P}\left(\frac{|W_n|}{b_n}\leq\delta\right)
=\lim_{n\to\infty}\mathbb{P}\left(\frac{|W_n|}{n}\frac{n}{b_n}\leq\delta\right)=1.
\end{align}
Condition on the event $\{|Z_n|>0\}$, let $u$ be any given particle at time $n$ (for example, one can label particles through the genealogical structure; see \cite[p13]{zhan}). Similarly to (\ref{tg3452}), we have
\begin{align}\label{54tfrf2}
&\int_{\mathbb{R}^d}\mathbb{P}_{\delta_x}\left(Z_n(B(b_nr))>0\right)dx\cr
&\geq\int_{\mathbb{R}^d}\mathbb{P}_{\delta_{x}}(|Z_n|>0)\mathbb{P}_{x}(S_u\in B(b_nr))dx\cr
&\geq\int_{|y|\leq r-\delta}b^d_n\mathbb{P}_{\delta_{b_ny}}(|Z_n|>0)\mathbb{P}(|W_n+b_ny|< b_nr)dy\cr
&\geq\mathbb{P}_{\delta_0}(|Z_n|>0)b^d_nv_d(r-\delta)\mathbb{P}\left(\frac{|W_n|}{b_n}<\delta\right).
\end{align}
This, combined with (\ref{extinpro1sigma}) and (\ref{56yyq}), yields

$$\liminf_{n\to\infty}\int_{\mathbb{R}^d}\mathbb{P}_{\delta_x}\left(Z_n(B(b_nr))>0\right)dx\geq v_d(r-\delta)(\beta^{-1})^{\beta^{-1}}.$$
By letting $\delta\to0$ and (\ref{tgbtsg4}), the above yields

$$\limsup_{n\to\infty}\mathbb{P}\left(\frac{R_n}{b_n}\geq r\right)\leq e^{-v_d(r)(\beta^{-1})^{\beta^{-1}}}.$$

\smallskip
\noindent\textbf{Lower bound.}
Let $\delta\in(0,\infty)$. Similarly to (\ref{wicm45})-(\ref{7t6ij6y67}), we have
\begin{align}\label{rfervr3454}
&\int_{\mathbb{R}^d}\mathbb{P}_{\delta_x}\left(Z_n(B(b_nr))>0\right)dx\cr
&\leq \mathbb{P}_{\delta_0}(|Z_n|>0)b^d_nv_d(r+\delta)+b^d_n\int_{|y|\geq r+\delta}\mathbb{P}\left(|W_n|\geq b_n(|y|-r)\right)dy.
\end{align}
Recall that $W^{(1)}_n$ is the first component of $W_n$. By Lemma \ref{Nagaevlem}, if $\mathbb{E}[|X|^{\alpha}]<\infty$ for some $\alpha>1$,
\begin{align}
b^d_n\int_{|y|\geq r+\delta}\mathbb{P}\left(|W_n|\geq b_n(|y|-r)\right)dy
&\leq db^d_n\int_{|y|\geq r+\delta}\mathbb{P}\left(|W^{(1)}_n|\geq b_n(|y|-r)\right)dy\cr
&\leq db^d_n\int_{|y|\geq r+\delta}nb^{-\alpha}_n(|y|-r)^{-\alpha}dy\cr
&\leq dnb^{d-\alpha}_n\int_{u\geq \delta}u^{-\alpha}u^{d-1}du.\nonumber
\end{align}
Similarly to (\ref{4tfs3e3})-(\ref{354rde}), one can show that if $\alpha>(\beta+1)d$,
$$
\lim_{n\to\infty}nb^{d-\alpha}_n=0.
$$
Thus, the second term on the right hand side of (\ref{rfervr3454}) converges to $0$ as $n\to\infty.$ This, together with (\ref{rfervr3454}) and (\ref{tgbtsg4}), implies that
$$\liminf_{n\to\infty}\mathbb{P}\left(\frac{R_n}{b_n}\geq r\right)\geq e^{-v_d(r)(\beta^{-1})^{\beta^{-1}}}.$$
\end{proof}

\section{Proof of Theorem \ref{thdim-subcrit}: $m<1$}\label{secsubcrtical}
Again, the proof of this theorem is similar to that for Theorem \ref{thdim1} in spirit.
\begin{proof}
\textbf{Upper bound}.
Similarly to (\ref{jitbndf12}), we have
\begin{align}
\mathbb{P}\left(\frac{R_n}{(1/m)^\frac{n}{d}}\geq r\right)=\exp\left\{-\int_{\mathbb{R}^d}\mathbb{P}_{\delta_x}(Z_n(B(m^{-n/d}r))>0)dx\right\}.
\end{align}
Fix $\delta\in(0,r)$. Similarly to (\ref{54tfrf2}),
\begin{align}\label{4fsw3d}
\int_{\mathbb{R}^d}\mathbb{P}_{\delta_x}(Z_n(B(m^{-n/d}r))>0)dx&\geq\int_{|x|\leq m^{-n/d}(r-\delta) }\mathbb{P}_{\delta_x}(Z_n(B(m^{-n/d}r))>0)dx\cr
&\geq\int_{|x|\leq m^{-n/d}(r-\delta) }\mathbb{P}_{\delta_x}(|Z_n|>0)\mathbb{P}(|W_n+x|\leq m^{-n/d}r )dx\cr
&=\mathbb{P}_{\delta_0}(|Z_n|>0)\int_{|y|\leq r-\delta }\mathbb{P}(|W_n+m^{-n/d}y|\leq m^{-n/d}r )m^{-n}dy\cr
&\geq\mathbb{P}_{\delta_0}(|Z_n|>0)m^{-n}\int_{|y|\leq r-\delta }\mathbb{P}\left(\left|\frac{W_n}{m^{-n/d}}+y\right|\leq r \right)dy\cr
&\geq\mathbb{P}_{\delta_0}(|Z_n|>0)m^{-n}v_d(r-\delta)\mathbb{P}\left(\left|\frac{W_n}{m^{-n/d}}\right|\leq \delta \right).
\end{align}
It is simple to see that
\begin{align}\label{5terswe}
\mathbb{P}\left(\left|\frac{W_n}{m^{-n/d}}\right|\leq \delta \right)&\geq\mathbb{P}\left(\left|\frac{S^{(1)}_n}{m^{-n/d}}\right|\leq \delta \right)^d\cr
&\geq\mathbb{P}\left(|X^{(1)}_{i}|\leq m^{-n/d}\delta/n~,\forall i=1,2,...,n\right)^d\cr
&\geq\left[1-\mathbb{P}\left(|X^{(1)}|\geq m^{-n/d}\delta/n\right)\right]^{nd}.
\frac{}{}\end{align}
If $\mathbb{E}[|X|^{\alpha}]<\infty$ for some $\alpha>0$, then by Markov inequality,
\begin{align}
\mathbb{P}\left(|X^{(1)}|\geq m^{-n/d}\delta/n\right)\leq \left(\frac{n}{m^{-n/d}\delta}\right)^{\alpha}\mathbb{E}\left[|X^{(1)}|^{\alpha}\right].\nonumber
\end{align}
Plugging above into (\ref{5terswe}) yields that
$$
\lim_{n\to\infty}\mathbb{P}\left(\left|\frac{W_n}{m^{-n/d}}\right|\leq \delta \right)=1.
$$
Thus, from (\ref{4fsw3d}) and Lemma \ref{extinpro1}, we obatin

\begin{align}
\limsup_{n\to\infty}\mathbb{P}\left(\frac{R_n}{(1/m)^\frac{n}{d}}\geq r\right)&=\exp\left\{-\int_{\mathbb{R}^d}\mathbb{P}_{\delta_x}(Z_n(B(m^{-n/d}r))>0)dx\right\}\cr
&\leq\exp\left\{-v_d(r-\delta)\lim_{n\to\infty}\mathbb{P}_{\delta_0}(|Z_n|>0)m^{-n}\right\}\cr
&\leq\exp\left\{-v_d(r-\delta)Q(0)\right\},\nonumber
\end{align}
which implies the desired upper bound by letting $\delta\to0$.

\smallskip
\noindent\textbf{Lower bound}.
Fix $\delta>0$, observe that
\begin{align}\label{rvre341}
&\int_{\mathbb{R}^d}\mathbb{P}_{\delta_x}(Z_n(B(m^{-n/d}r))>0)dx\cr
&\leq\int_{|x|\leq m^{-n/d}(r+\delta)}\mathbb{P}_{\delta_x}(|Z_n|>0)dx+\int_{|x|> m^{-n/d}(r+\delta)}\mathbb{P}_{\delta_x}(Z_n(B(m^{-n/d}r))>0)dx\cr
&\leq\mathbb{P}_{\delta_0}(|Z_n|>0)m^{-n}v_d(r+\delta)+\int_{|x|> m^{-n/d}(r+\delta)}\mathbb{E}_{\delta_0}[|Z_n|]\mathbb{P}\left(|W_n+x|\leq m^{-n/d}r\right)dx\cr
&=\mathbb{P}_{\delta_0}(|Z_n|>0)m^{-n}v_d(r+\delta)+\int_{|y|> r+\delta}\mathbb{P}\left(|W_n+m^{-n/d}y|\leq m^{-n/d}r\right)dy.
\end{align}
If $\mathbb{E}[|X|^{\alpha}]<\infty$ for some $\alpha>1$, then by Markov inequality,
\begin{align}
\mathbb{P}\left(|W_n+m^{-n/d}y|\leq m^{-n/d}r\right)&\leq\mathbb{P}\left(|W_n|\geq m^{-n/d}(|y|-r)\right)\cr
&\leq\mathbb{P}\left(\exists 1\leq i\leq n~\text{s.t.}~|X_i|\geq m^{-n/d}(|y|-r)/n\right)\cr
&\leq n\mathbb{P}\left(|X|\geq m^{-n/d}(|y|-r)/n\right)\cr
&\leq\frac{n^{1+\alpha}}{m^{-n\alpha/d}}\frac{\mathbb{E}[|X|^{\alpha}]}{(|y|-r)^{\alpha}},
\end{align}
which implies the second term on the r.h.s. of (\ref{rvre341}) tends to $0$. This, together with (\ref{rvre341}), yields that
\begin{align}
\liminf_{n\to\infty}\mathbb{P}\left(\frac{R_n}{(1/m)^\frac{n}{d}}\geq r\right)&=\exp\left\{-\int_{\mathbb{R}^d}\mathbb{P}_{\delta_x}(Z_n(B(m^{-n/d}r))>0)dx\right\}\cr
&\geq\exp\left\{-v_d(r+\delta)\lim_{n\to\infty}\mathbb{P}_{\delta_0}(|Z_n|>0)m^{-n}\right\}\cr
&=\exp\left\{-v_d(r+\delta)Q(0)\right\},\nonumber
\end{align}
where the last equality follows from Lemma \ref{extinpro1}. The desired lower bound follows by letting $\delta\to0$.
\end{proof}

\textbf{Acknowledgements} The first author's research is supported in part by NSFC grants 61873325,
11831010 and Southern University of Science and Technology Start up found Y01286120.

\smallskip
\textbf{Competing Interests} There were no competing interests to declare which arose during
the preparation or publication process of this article.

\smallskip
\textbf{Availability of data and materials} Data sharing is not applicable to this article as no
datasets were generated or analyzed during the current study.\\

\vspace{0.2cm}
Jie Xiong\\
Department of Mathematics, Southern University of Science and Technology, Shenzhen, China\\
E-mail: xiongj@sustech.edu.cn

\bigskip
\noindent Shuxiong Zhang\\
Department of Mathematics, Southern University of Science and Technology, Shenzhen, China\\
E-mail: shuxiong.zhang@mail.bnu.edu.cn
\end{document}